 \newtheorem{theorem}{Theorem}[section]
 \newtheorem{corollary}[theorem]{Corollary}
 \newtheorem{lemma}[theorem]{Lemma}
 \newtheorem{proposition}[theorem]{Proposition}
 \theoremstyle{definition}
 \newtheorem{definition}[theorem]{Definition}
 \theoremstyle{remark}
 \newtheorem{remark}[theorem]{Remark}
 \newtheorem{example}[theorem]{Example}
 \numberwithin{equation}{section}
\newcommand{\toto}{\rightrightarrows}
\newcommand{\R}{{\mathbb R}}
\newcommand{\N}{{\mathbb N}}
\newcommand{\To}{\longrightarrow}
\def\1{\^{\i}}
\def\2{\u{a}}
\def\3{\c{s}}
\def\4{\^{a}}
\def\5{\c{t}}
\def\a{\alpha}
\def\b{\beta}
\def\e{\epsilon}
\def\l{\lambda}
\def\<{\langle}
\def\>{\rangle}
\DeclareMathOperator*\cl{cl}
\DeclareMathOperator*\co{co}
\DeclareMathOperator*\inte{int}
\begin{document}
\title[Perturbed Weak Vector  Equilibrium Problems]{On Perturbed Weak Vector  Equilibrium Problems under new Semi-continuities}



\author{Szil\'ard L\' aszl\' o}

\address{Department of Mathematics\\ Technical University of Cluj-Napoca\\
              Str. Memorandumului nr. 28, 400114 Cluj-Napoca, Romania.}
              \email{laszlosziszi@yahoo.com}

\subjclass{47H05, 47J20, 26B25, 90C33}

\keywords{vector equilibrium problem, primal-dual equilibrium problem, perturbed equilibrium problem}
\thanks{This work was supported by a grant of Ministry of Research and Innovation, CNCS - UEFISCDI, project number PN-III-P4-ID-PCE-2016-0190, within PNCDI III}
\begin{abstract}
In this paper we introduce a new semicontinuity notion, which is weaker than upper semicontinuity, and assures the closedness of the sets $G(y)=\{x\in K: f(x,y)\not\in -\inte C\}.$ Furhter, this semicontinuity is also closed under addition. These two properties make our new semicontinuity applicable in situations where other semicontinuities, like quasi upper semicontinuity or order upper semicontinuity, fail. The above emphasized properties are some key tools in order to provide new sufficient conditions that ensure the existence of the solution of a perturbed weak vector equilibrium problem in Hausdorff topological vector spaces ordered by a cone. Further, we introduce a dual problem and we provide conditions that assure that every solution of the dual problem is also a solution of the perturbed weak vector equilibrium problem. We apply the results obtained to Ekeland vector variational principles.
\end{abstract}

\maketitle

\section{Introduction}

Let $X$ and  $Z$ be  Hausdorff topological vector spaces, let $K\subseteq X$ be a nonempty set and let $C\subseteq Z$ be a  convex and pointed cone. Assume that the interior of the cone $C$, denoted by $\inte C$, is nonempty and consider the mappings $f,g:K\times K\To Z.$  The perturbed weak vector equilibrium problem  consists in finding $x_0\in K$, such that
\begin{equation}\label{p1}
f(x_0,y)+g(x_0,y)\not\in-\inte C,\,\forall y\in K.
\end{equation}

The dual vector equilibrium problem of (\ref{p1}) is defined as:
 Find $x_0\in K$, such that
\begin{equation}\label{p2}
g(x_0,y)\not\in-\inte C,\,\forall y\in K.
\end{equation}

The study of the problems (\ref{p1}) and (\ref{p2}) is motivated by the following setting.
Assume that the weak vector equilibrium problem, which consists in finding $x_0\in K$ such that $f(x_0,y)\not\in-\inte C$ for all $y\in K$, has no solution though the diagonal condition $f(x,x)=0$, (or more general $f(x,x)\in C$), for all $x\in K$ holds. Then, we may study instead a  perturbed equilibrium problem (see also \cite{durea,FQ}) and provide assumptions on the perturbation function $g$, such that the problem which consists in finding $x_0\in K$ such that $f(x_0,y)+g(x_0,y)\not\in-\inte C$ for all $y\in K,$  has a solution. Moreover, for an appropriate perturbation $g$ the dual problem, that is, find $x_0\in K$ such that $g(x_0,y)\not\in-\inte C$ for all $y\in K$, has a solution. Hence, it is  worthwhile to obtain conditions that assure the the solution set of  (\ref{p2}) is included in the solution set of (\ref{p1}). This setting may have some important consequences. Indeed, by taking $X$ a Banach space and $g(x,y)=\e\|x-y\|e$, where $\e>0$ and $e\in C\setminus\{0\},$ a solution of the perturbed vector equilibrium problem is called $\e$-equilibrium point, see \cite{BKP,BKP1}. Further, special cases of the perturbed vector equilibrium problems lead to some deep results such as Deville-Godefroy-Zizler perturbed equilibrium principle or Ekeland vector variational principle, see \cite{FQ}.

In this paper, we obtain some existence results of the solution for the vector equilibrium problem (\ref{p1}) and (\ref{p2}). Some of our conditions and the techniques used are new in the literature. Several examples and counterexamples circumscribe our research and show that our conditions are essential.
The paper is organized as follows. In the next section, we introduce some preliminary notions and the necessary apparatus that we need in order to obtain our results. In section 3 and section 4 we state our results concerning on perturbed weak vector equilibrium problems. Our conditions, which ensure the solution existence of the above mentioned vector equilibrium problems are  considerably weakening the existing conditions from the literature.  We pay a special attention to the case when the set $K$ is a closed subset of a reflexive Banach  space.  Finally, we apply our results to Ekeland  vector variational principles.
\section{Preliminaries}

Let $X$ be a real Hausdorff topological vector space.  For a non-empty set $D\subseteq X$, we denote by $\inte D$ its interior, by $\cl D$ its closure and by $\co D$ its convex hull.   Recall that a set $C\subseteq X$ is a cone, iff $tc\in C$ for all $c\in C$   and $t\ge 0.$ The cone $C$ is convex if $C+C=C,$ and pointed if $C\cap (-C)=\{0\}.$ Note that a closed, convex and pointed cone $C$ induce a partial ordering on $Z$, that is $z_1\le z_2\Leftrightarrow z_2-z_1\in C.$ In the sequel when we use $\inte C,$  we tacitly assume that the cone $C$ has nonempty interior. Following the same logical approach, one can introduce the strict inequality  $z_1< z_2\Leftrightarrow z_2-z_1\in \inte C,$ or $z_1< z_2\Leftrightarrow z_2-z_1\in  C\setminus\{0\}.$ These relations lead to  saying, that $z_1\not< z_2\Leftrightarrow z_2-z_1\not\in-\inte C$, or $z_1\not< z_2\Leftrightarrow z_2-z_1\not\in-C\setminus\{0\}.$ It is an easy exercise to show that   $\inte C+C= \inte C.$

Let   $Z$ be  another  Hausdorff topological vector space, let $K\subseteq X$ be a nonempty set and let $C\subseteq Z$ be a  convex and pointed cone.

A map $f:K\subseteq X\To Z$ is said to be C-upper semicontinuous at $x\in K$ (\cite {BPT}) iff for any neighborhood $V$ of $f(x)$ there exists a neighborhood $U$ of $x$ such that $f(u)\in  V-C$ for all $u\in U\cap K$. Obviously, if $f$ is continuous at $x\in K,$ then it is  also C-upper semicontinuous at $x\in K$.
Assume that $C$ has nonempty interior.  According to \cite{Ta} $f$ is C-upper semicontinuous at $x\in K,$ if and only if, for any $k\in\inte C$, there exists a neighborhood $U$ of $x$ such that
$$f(u) \in f(x) + k -\inte C\mbox{ for all }u \in U\cap K.$$

The map $f:K\To Z$ is said to be C-lower semicontinuous at $x\in K$ iff the map $-f$ is C-upper semicontinuous at $x\in K.$

\begin{definition} Let $K\subseteq X$ be convex. The function $f : K\to Z$ is called $C$-convex on $K$, iff  for all $x,y\in K$ and $t\in[0,1]$ one has
$$tf(x)+(1-t)f(y)-f(tx+(1-t)y)\in C.$$
\end{definition}

Note that the function $f : K\to Z$ is  $C$-convex, iff  for all $x_1,x_2,\ldots, x_n\in K$, $n\in \N$ and $\lambda_i\ge 0,\, i\in\{1,2,\ldots,n\},$ with
$\sum_{i=1}^n \lambda_i=1,$  one
has
$$ \sum_{i=1}^n \l_i f(x_i)-f\left(\sum_{i=1}^n \l_ix_i\right) \in C.$$
We will  use the following notations for the open, respectively
closed, line segments in $X$ with the endpoints $x$ and $y$
\begin{eqnarray*}
]x,y[ &:=&\big\{z\in X:z=x+t(y-x),\,t\in ]0,1[\big\}, \\
\lbrack x,y] &:=&\big\{z\in X:z=x+t(y-x),\,t\in \lbrack 0,1]\big\}.
\end{eqnarray*}
The line segments $]x,y],$ respectively $[x,y[$ are defined similarly.

\section{On some new and old semicontinuity notions}

In this section we provide a new closedness type condition, unknown till now in the literature, that is essential in obtaining  the existence of a solution for the problem (\ref{p1}) and (\ref{p2}), respectively. This condition is based on some sequential properties of a bifunction and leads to a notion that we call a-upper semicontinuity. We show that our closedness type condition is weaker than those used in the literature, which results from the fact that a-usc is weaker than C-usc. Further we compare our new semicontinuity notion with those used in the literature. We emphasize here two properties of a-usc: it ensures the closedness of the "upper"-level sets relative to the "ordering" $\not\ge,$ which makes a-usc a key tool in study the solution existence of weak vector equilibrium problem, and is closed under addition which makes a-usc suitable for studying perturbed weak vector equilibrium problems. We also underline that the upper semicontinuities used in the literature, excepting C-usc, fail to have the properties mentioned above.


\begin{remark}\rm
In order to obtain solution existence for the problem (\ref{p2}) we need conditions that assure for every $y\in K$ the closedness of the sets $G(y)=\{x\in K: g(x,y)\not\in-\inte C\},$  that is, for every  net $(x_\a)\subseteq G(y),\, \lim x_\a=x$ one has: $g(x,y)\not\in-\inte C.$ This is a key tool in order to obtain solution existence of problem (\ref{p2}), and in the literature usually is obtained via the  C-upper semicontinuity of the mapping $x\longrightarrow g(x,y).$ Next we provide a new, much weaker condition.
\end{remark}
\begin{lemma}\label{primclosed} Let $X$ be a Hausdorff space and let $Z$ be a Hausdorff  topological vector space,  let $C\subseteq Z$ be a convex and pointed cone with nonempty interior and let $K$ be a nonempty and closed subset of $X$. Let $y\in K$ and consider the mapping $g :K \times K \longrightarrow{Z}.$ Assume that one of the following conditions hold.
\begin{itemize}
\item[(a)] The mapping $x\longrightarrow g(x,y)$ is C-upper semicontinuous on $K.$
\item[(b)] For every  $x\in K$ and for every  net $(x_\a)\subseteq K,\, \lim x_\a=x$ there exists a net $z_\a\subseteq Z,\, \lim z_\a=z$ such that $g(x_\a,y)-z_\a\in -C$ and $g(x,y)-z\in C.$
\end{itemize}
Then, the set $G_g(y)=\{x\in K: g(x,y)\not\in-\inte C\}$ is closed.
\end{lemma}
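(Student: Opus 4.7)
The plan is to prove closedness directly by a contradiction argument: take a net $(x_\alpha) \subseteq G_g(y)$ with $\lim x_\alpha = x$ (note $x \in K$ since $K$ is assumed closed), assume for contradiction that $g(x,y) \in -\inte C$, and produce eventually along the net that $g(x_\alpha, y) \in -\inte C$, which contradicts $x_\alpha \in G_g(y)$. The key algebraic identity I will rely on throughout is $\inte C + C = \inte C$ (hence $-\inte C - C = -\inte C$), which is explicitly noted in the preliminaries.

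Under assumption (a), I would use the characterization given in the paper: $g(\cdot,y)$ is $C$-upper semicontinuous at $x$ if and only if for each neighborhood $V$ of $g(x,y)$ there exists a neighborhood $U$ of $x$ with $g(u,y) \in V - C$ for all $u \in U \cap K$. If $g(x,y) \in -\inte C$, then since $-\inte C$ is open I can choose such a neighborhood $V \subseteq -\inte C$, giving $g(u,y) \in -\inte C - C = -\inte C$ for all $u \in U \cap K$. Since $x_\alpha \to x$, eventually $x_\alpha \in U \cap K$, yielding $g(x_\alpha,y) \in -\inte C$, the desired contradiction.

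Under assumption (b), starting from $\lim x_\alpha = x$ I obtain a net $z_\alpha \to z$ in $Z$ satisfying $g(x_\alpha,y) - z_\alpha \in -C$ and $g(x,y) - z \in C$. Assuming again $g(x,y) \in -\inte C$, the second inclusion rewrites as $z \in g(x,y) - C \subseteq -\inte C - C = -\inte C$. Since $-\inte C$ is open and $z_\alpha \to z$, we have $z_\alpha \in -\inte C$ eventually. The first inclusion rewrites as $g(x_\alpha,y) \in z_\alpha - C$, hence eventually $g(x_\alpha,y) \in -\inte C - C = -\inte C$, again contradicting $x_\alpha \in G_g(y)$.

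I do not expect any genuine obstacle here; the argument is essentially a manipulation of openness of $-\inte C$ combined with the stability relation $\inte C + C = \inte C$. The only subtlety is ensuring that in (b) one reads the signs correctly: $g(x_\alpha,y) - z_\alpha \in -C$ gives $g(x_\alpha,y) \in z_\alpha - C$, while $g(x,y) - z \in C$ gives $z \in g(x,y) - C$; once these are stated correctly, the openness of $-\inte C$ and convergence of $z_\alpha$ close the argument.
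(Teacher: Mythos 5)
Your proposal is correct and follows essentially the same contradiction argument as the paper: assume $g(x,y)\in-\inte C$ for the limit point, then use openness of $-\inte C$ together with $\inte C+C=\inte C$ to force $g(x_\a,y)\in-\inte C$ eventually, contradicting $x_\a\in G_g(y)$. The only cosmetic difference is in part (a), where the paper invokes the Tanaka characterization of C-upper semicontinuity with the specific choice $k=-g(x_0,y)\in\inte C$, while you work directly from the neighborhood definition by choosing $V\subseteq-\inte C$; the two are interchangeable here.
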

\begin{proof}  Let us prove (a). Consider the net $(x_\a)\subseteq G(y)$ and let $\lim x_\a=x_0.$ Assume that $x_0\not\in G(y).$ Then $g(x_0,y)\in-\inte C$.
 According to the assumption the function $x\longrightarrow g(x,y)$ is C-upper semicontinuous at $x_0$, hence  for every $k\in\inte C$ there exists $U,$ a neighborhood of $x_0,$ such that $g(x,y)\in g(x_0,y)+k-\inte C$ for all $x\in U\cap K.$  But then, for $k=-g(x_0,y)\in\inte C$,  one obtains that there exits $\a_0$ such that $g(x_\a,y)\in -\inte C,$ for $\a\ge\a_0,$ which contradicts the fact that $(x_\a)\subseteq G(y_0)$.
Hence $G(y)\subseteq K$ is closed.

For (b) consider the net $(x_\a)\subseteq G(y)$ and let $\lim x_\a=x_0.$ Assume that $x_0\not\in G(y).$ Then $g(x_0,y)\in-\inte C$. But by the assumption there exists a net $z_\a\subseteq K,\, \lim z_\a=z$ such that $g(x_\a,y)-z_\a\in -C$ and $g(x_0,y)-z\in C.$ From the latter relation we get $z\in-\inte C$, and since $-\inte C$ is open we have that there exists $\a_0$ such that  $z_\a\in -\inte C$ for every $\a\ge \a_0.$ But then, $g(x_\a,y)\in z_\a -C$ and $\inte C+C=\inte C$ leads to $g(x_\a,y)\in -\inte C$ for $\a\ge\a_0,$ contradiction.
\end{proof}

\begin{remark}\label{ab}\rm  Note that condition (b) is new in the literature, however some similar results are obtained in \cite{larx}. In what follows we show that (a) implies (b). Further, by an example we show that (b) is indeed weaker than (a).
\end{remark}

\begin{proposition}  Let $X$ be a Hausdorff space and let $Z$ be a Hausdorff  topological vector space,  let $C\subseteq Z$ be a convex and pointed cone with nonempty interior and let $K$ be a nonempty and closed subset of $X$. Let $y\in K$ and consider the mapping $g :K \times K \longrightarrow{Z}.$ Assume that the mapping $x\longrightarrow g(x,y)$ is C-upper semicontinuous on $K.$ Then, for every  $x\in K$ and for every  net $(x_\a)\subseteq K,\, \lim x_\a=x$ there exists a net $z_\a\subseteq Z,\, \lim z_\a=z$ such that $g(x_\a,y)-z_\a\in -C$ and $g(x,y)-z\in C.$
\end{proposition}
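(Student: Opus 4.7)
My plan is to take $z := g(x,y)$, so that $g(x,y)-z = 0 \in C$ trivially, and then construct $z_\a$ of the form $g(x,y) + k_0/n(\a)$ with $k_0 \in \inte C$ fixed and $n(\a) \in \N$ tending to infinity along the net, which will force $z_\a \to g(x,y)$. Note that $\inte C$ is invariant under multiplication by positive scalars (scalar multiplication by $1/n > 0$ is a homeomorphism of $Z$ and $C$ is a cone), so $k_0/n \in \inte C$ for every $n \in \N$. By the characterization of $C$-upper semicontinuity at $x$ recalled from \cite{Ta} in the preliminaries, applied with $k = k_0/n$, for each $n$ there exists a neighborhood $U_n$ of $x$ such that
$$g(u,y) \in g(x,y) + k_0/n - \inte C \quad \text{for all } u \in U_n \cap K,$$
equivalently $g(x,y) + k_0/n - g(u,y) \in \inte C$ for such $u$.

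Since $x_\a \to x$, for each $n$ there is an index $\b_n$ in the directing set $A$ of the net such that $x_\a \in U_n$ whenever $\a \geq \b_n$, and we may inductively arrange $\b_n \leq \b_{n+1}$ by passing to joins in $A$. For each $\a \in A$ with $\a \geq \b_1$ I would choose $n(\a) \in \N$ so that $\a \geq \b_{n(\a)}$ (which activates the inclusion above) and so that $n(\a) \to \infty$ in the net-theoretic sense (for every fixed $N$, the condition $\a \geq \b_N$ is eventually satisfied, so one may always pick $n(\a) \geq N$ on a tail, e.g.\ by setting $n(\a) := \max\{n : \a \geq \b_n\}$ with an arbitrary finite cap if this maximum happens to be $\infty$). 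Define $z_\a := g(x,y) + k_0/n(\a)$ for these $\a$, and for the remaining $\a \not\geq \b_1$ define $z_\a := g(x_\a,y) + k_0$ so that $g(x_\a,y) - z_\a = -k_0 \in -C$ holds trivially.

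The verification is then immediate: on the cofinal set $\{\a : \a \geq \b_1\}$, applying the $\inte C$-inclusion with $n = n(\a)$ gives
$$g(x_\a,y) - z_\a \;=\; g(x_\a,y) - g(x,y) - k_0/n(\a) \;\in\; -\inte C \;\subseteq\; -C,$$
while $z_\a - g(x,y) = k_0/n(\a) \to 0$ in $Z$ because $n(\a) \to \infty$, so $z_\a \to g(x,y) = z$. The step I expect to be the main obstacle is the purely net-theoretic bookkeeping around directed sets in which a single index $\a$ may dominate every $\b_n$ simultaneously; this is handled by the capping trick, and is in any case harmless because the subsequent use of (b) in Lemma \ref{primclosed} only requires the $-C$ inclusion on a tail of the net.
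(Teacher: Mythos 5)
Your overall scheme---take $z=g(x,y)$ and build $z_\a$ as perturbations $g(x,y)+k_0/n(\a)$ with $k_0\in\inte C$ fixed and $n(\a)$ growing, invoking the Tanaka characterization of $C$-upper semicontinuity with $k=k_0/n$---is sound and close in spirit to the paper's argument, but more explicit. (The paper fixes a single closed neighbourhood $V$ of $g(x,y)$, takes an arbitrary net $(s_\a)\subseteq V$ converging to $g(x,y)$, and deduces $g(x_\a,y)-s_\a\in-C$ from $g(x_\a,y)\in V-C$; as written this is a non sequitur, because the element of $V$ witnessing $g(x_\a,y)\in V-C$ need have nothing to do with $s_\a$. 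Your shrinking-perturbation device is the natural way to make such an argument honest.)

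The genuine gap is the one you yourself flag, and the capping trick does not close it. Put $T=\{\a: \a\ge\b_n \mbox{ for all } n\}$. If $T\neq\emptyset$ it is upward closed, hence meets every tail $\{\a:\a\ge\gamma\}$ of the index set. On $T$ your capped definition yields $z_\a=g(x,y)+k_0/M$ with $M$ a fixed finite value, a point at a fixed nonzero displacement from $g(x,y)$ (note $k_0\neq0$, since $0\in\inte C$ would force $C=Z$, contradicting pointedness); as $Z$ is Hausdorff, no tail of $(z_\a)$ then stays inside a small enough neighbourhood of $g(x,y)$, so $\lim z_\a=z$ fails. Your closing remark defends the wrong condition: the $-C$-inclusion indeed holds everywhere, but the convergence of $(z_\a)$ is part of the conclusion being proved, and it is genuinely needed in Lemma \ref{primclosed}(b), where $z\in-\inte C$ together with $\lim z_\a=z$ is what forces $z_\a\in-\inte C$ eventually. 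For sequences the repair is immediate (cap $n(m)$ at $m$ itself, so that $n(m)\To\infty$), but for a general directed set a map $n:A\To\N$ tending to infinity need not exist at all (take $A$ with a greatest element, or $A=\omega_1$); on $T$ one must instead choose $z_\a$ differently, e.g.\ $z_\a=g(x,y)$, which requires $g(x_\a,y)-g(x,y)\in\bigcap_n\left(k_0/n-\inte C\right)\subseteq-\cl C$ and hence a closedness hypothesis on $C$ that the Proposition does not make. In fairness, the paper's own proof is no more careful at exactly this point.
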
 \begin{proof}
    Let $x_0\in K$ and consider the  net $(x_\a)\subseteq K,\, \lim x_\a=x_0.$ We show that there exists a net $z_\a\subseteq Z,\, \lim z_\a=z$ such that $g(x_\a,y)-z_\a\in -C$ and $g(x_0,y)-z\in C.$

    We have that for every neighbourhood of $g(x_0,y)$, say $V$, there exists $U,$ a neighbourhood of $x_0$, such that $g(x,y)\in V-C$ for all $x\in U.$ Obviously on can take $V$ closed, hence there exists a net $(s_\a)\subseteq V$ such that $\lim s_\a=g(x_0,y).$
 Since $\lim x_\a=x_0$  we have that $x_\a\in U$ for every $\a\ge\a_0,$ hence $g(x_\a,y)\in V-C$ for every $\a\ge\a_0.$ This leads to
 $g(x_\a,y)-s_\a\in -C$ for every $\a\ge\a_0.$ Hence one can take $z_\a=g(x_\a,y)$ if $\a<\a_0$ and $z_\a=s_\a$ for $\a\ge\a_0$ and the conclusion follows.
 \end{proof}

In what follows we provide an example to emphasize that the condition (b) in Lemma \ref{primclosed} is in general weaker than condition (a).

\begin{example}\rm\label{ex3.2} Let $C=\{(x_1,x_2)\in \R^2: x_1^2\le x_2^2,\,x_2\ge 0\}.$ Obviously $C$ is a closed convex and pointed cone in $\R^2$ with nonempty interior. Consider the bifunction $$g:[0,1]\times [0,1]\To \R^2,\,g(x,y)=\left\{\begin{array}{ll}(x+y,x)\mbox{ if } 0\le x\le\frac12,\\(2x+y,0)\mbox{ if }\frac12<x\le 1.\end{array} \right.$$
Then, for every fixed $y\in [0,1]$ the mapping $x\To g(x,y)$ is continuous, hence it is also C-upper semicontinuous, at every $x\in[0,1]\setminus\{\frac12\}.$
We show that $x\To g(x,y)$ is not C-upper semicontinuous at the point $x=\frac12$ for every fixed $y\in[0,1].$ For this it is enough to show that for all $\e>0$ there exists $(c_1,c_2)\in\inte C$ and $x\in \left]\frac12-\e,\frac12+\e\right[$ such that $(c_1,c_2)+g(\frac12,y)-g(x,y)\not\in\inte C.$ Hence, for fixed $\e>0$ ($\e<2$) let $(c_1,c_2)=(\e-1,1)\in\inte C.$ Consider $x=\frac12+\frac{\e}{2}\in \left]\frac12-\e,\frac12+\e\right[.$ Then, $(c_1,c_2)+g(\frac12,y)-g(x,y)=\left(-\frac32,\frac32\right)\not\in\inte C.$

Next, we show that condition (b) in Lemma \ref{primclosed} holds for $x=\frac12$ and every fixed $y\in[0,1].$ Obviously instead of nets one can consider sequences, hence let $(x_n)\subseteq [0,1],\,x_n\To \frac12,\,n\To\infty.$ We must show, that there exists a sequence $(z_n)\subseteq \R^2,\, \lim z_n=z$ such that $g(x_n,y)-z_n\in -C$ and $g\left(\frac12,y,\right)-z\in C.$

Let $z_n=(x_n+y,x_n).$ Then $g(x_n,y)-z_n=(0,0)\in-C$ for every $n\in\N,$ such that $x_n\le\frac12$ and $g(x_n,y)-z_n=(x_n,-x_n)\in-C$ for every $n\in\N,$ such that $x_n>\frac12.$ Obviously $\lim z_n=z=\left(\frac12+y,\frac12\right)$, hence $g\left(\frac12,y\right)-z=(0,0)\in C.$
\end{example}

Assumption (b) in Lemma \ref{primclosed} leads to the following definition.

\begin{definition} Let $X$ be a Hausdorff space and let $Z$ be a Hausdorff  topological vector space,  let $C\subseteq Z$ be a convex and pointed cone with nonempty interior and let $K$ be a nonempty subset of $X$. One says that the mapping $g :K\longrightarrow{Z}$ is almost upper semicontinuous (briefly a-usc) at $x_0\in K$ if for every  net $(x_\a)\subseteq K,\, \lim x_\a=x_0\in K$ there exists a net $z_\a\subseteq Z,\, \lim z_\a=z$ such that $g(x_\a)-z_\a\in -C$ and $g(x_0)-z\in C.$  Similarly, the mapping $g :K\longrightarrow{Z}$ is almost lower semicontinuous (briefly a-lsc) at $x_0\in K$ if $-g$ is a-usc at $x_0.$ One says that $g$ is a-usc (a-lsc) on $K$ if $g$ is a-usc (a-lsc) at every $x\in K.$
\end{definition}

\begin{remark} Note that Example \ref{ex3.2} shows that the a-usc property is weaker than usc. In the literature there are several semicontnuity concepts weaker than usc, the most prominent of them are the quasi upper semicontinuity (q-usc) and the order upper semicontinuity (o-usc) (see \cite{FQ1}.  We show next that a-usc implies q-usc 
and in the real valued case, i.e. $Z=\R$ and $C=\R_+$, a-usc, q-usc and o-usc collapse into the well known upper semicontinuity concept of real valued functions.
\end{remark}

In our setting these concepts can be defined as follows.

The map $g:K\To Z$ is said to be quasi upper semicontinuous (q-usc) at $x_0 \in K$ )(see \cite{Th}) iff,
for each $k\in Z$ such that $k+g(x_0)\not\in C$, there exists a neighborhood
$U$ of $x_0$ such that $k+g(x)\not\in C$ for each $x \in U\cap K.$

The map $g:K\To Z$ is said to be order upper semicontinuous (o-usc) at $x_0 \in K$ (see \cite{FQ1}) iff, for each net  $(x_\a) \subseteq K,\, \lim x_\a=x_0$ for which there exists a net $(z_\a) \subseteq Z,\, \lim z_\a=0$ such that the net $g(x_\a) + z_\a$ is non-decreasing, i.e. $(g(x_\b)+z_\b)-(g(x_\a) + z_\a)\in C,$ for all $\b>\a$, there exists a net $(w_\a) \subseteq Z,\,\lim w_\a=0$ such that $g(x_0)-g(x_\a)+w_\a\in C$ for all $\a$.

Proposition 7 \cite{FQ1} shows that in our setting, that is, the interior of $C$ is nonempty, q-usc implies o-usc (at least in  Banach  spaces). Hence, so far we know that in our setting  $g$ usc at $x_0\implies$ $g$ a-usc at $x_0$, and $g$ usc at $x_0\implies$ $g$ q-usc at $x_0\implies$ $g$ o-usc at $x_0$. We show next that the a-usc implies q-usc in a slightly weaker setting than the framework of \cite{FQ1}, that is, $X$ is a metric space and $C$ is a closed, convex pointed cone with nonempty interior. Further by  an example we show that q-usc and o-usc are indeed weaker than a-usc.

\begin{proposition} Let $(X,d)$ be a metric space and let $Z$ be a Hausdorff  topological vector space,  let $C\subseteq Z$ be a closed, convex and pointed cone with nonempty interior. Consider the  mapping $g:X\To Z$ and assume that $g$ is a-usc at $x_0\in X.$ Then, $g$ is q-usc at $x_0.$
\end{proposition}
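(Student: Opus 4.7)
The plan is to argue by contradiction, exploiting the metric structure of $X$ to reduce nets to sequences and the closedness of $C$ to pass to the limit.

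Suppose, for contradiction, that $g$ is a-usc at $x_0$ but not q-usc at $x_0$. Then there exists some $k \in Z$ with $k + g(x_0) \not\in C$ such that every neighborhood of $x_0$ contains a point $x \in X$ with $k + g(x) \in C$. Because $X$ is a metric space, I would use the balls of radius $1/n$ around $x_0$ to extract a sequence $(x_n) \subseteq X$ with $x_n \to x_0$ satisfying $k + g(x_n) \in C$ for every $n \in \N$.

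Since $g$ is a-usc at $x_0$ and the sequence $(x_n)$ converges to $x_0$, there exists a sequence $(z_n) \subseteq Z$ with $z_n \to z$ such that $g(x_n) - z_n \in -C$ and $g(x_0) - z \in C$. The key algebraic step is to combine $z_n - g(x_n) \in C$ with $k + g(x_n) \in C$; since $C$ is a convex cone (so $C + C \subseteq C$), this yields $k + z_n \in C$ for every $n$.

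Now I would pass to the limit $n \to \infty$: because $C$ is closed and $k + z_n \to k + z$, we obtain $k + z \in C$. Adding this to $g(x_0) - z \in C$ (again using $C + C \subseteq C$) gives $k + g(x_0) \in C$, which contradicts the choice of $k$. Hence $g$ must be q-usc at $x_0$.

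The main things to get right are small: ensuring that the sequential definition suffices (guaranteed by the metric space hypothesis), and using convexity of $C$ together with its closedness at the right moments. There is no real obstacle beyond careful bookkeeping of the two cone-membership relations and their sum.
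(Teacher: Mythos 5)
Your proof is correct and follows essentially the same route as the paper's: contradiction, extraction of a sequence $(x_n)\to x_0$ with $k+g(x_n)\in C$ via shrinking balls, application of a-usc to get $(z_n)\to z$, then $k+z_n\in C$ by $C+C=C$, closedness of $C$ to pass to the limit, and finally $k+g(x_0)\in k+z+C\subseteq C$ for the contradiction. The only cosmetic difference is that the paper builds the sequence with a slightly more elaborate nested-radius construction, which buys nothing over your radius-$1/n$ balls.
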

\begin{proof} Assume that $g$ is not q-usc at $x_0$. Then, there exists $k\in Z$ such that $k+g(x_0)\not\in C$ and for every neighbourhood of $x_0$ say $U$, there exists $x\in U$ such that $k+g(x)\in C.$ We show that there exists a sequence $(x_n)\subseteq X,\, x_n\To x_0$ such that $k+g(x_n)\in C$ for all $n\in \N.$ Indeed, let $r_1=1$ and consider $U_1=B(x_0,r_1)$ a neighbourhood of $x_0.$ Then there exists $x_1\in U_1$ such that $k+g(x_1)\in C.$ Let $r_2<\min\left(d(x_1,x_0),\frac12\right)$ and consider $U_2=B(x_0,r_2).$ Then there exists $x_2\in U_2$ such that $k+g(x_2)\in C.$ By continuing the procedure we obtain the sequence $x_1, x_2,...,x_n\in X$ and the real sequence $r_1>r_2>...>r_n,$ such that $r_i<d(x_i,x_0)<r_{i-1}\le \frac{1}{i-1},\, i=1,2,...,n$.
Since $r_n\le \frac1n$ one has $r_n\To 0,\,n\To\infty$ which shows that $x_n\To x_0,\,n\To\infty.$

Now by the a-usc property of $g$ in $x_0,$ we have that there exists a sequence $(z_n)\subseteq Z,\, z_n\To z$ such that
 $$z_n\in g(x_n)+C\mbox{ and } g(x_0)\in z+C,\,\forall n\in\N.$$
Hence, $k+z_n\in k+g(x_n)+C\subseteq C+C=C$ and since $C$ is closed, by taking the limit $n\To\infty$ we have $k+z\in C.$

On the other hand, $k+g(x_0)\in k+z+C\subseteq C+C=C$ contradiction.

Hence, $g$ is q-usc at $x_0$ and the proof is complete.
\end{proof}
The next example is inspired by \cite{FQ1} and shows that q-usc and also o-usc is indeed weaker than a-usc.

\begin{example}\label{ex3.3} Let $g: \R \To \R^2$ defined by
$ g(x) =\left\{\begin{array}{ll} \left(1, -\frac{1}{|x|}\right)\mbox{ if }x \neq 0,\\
(0, 0)\mbox{ if }x = 0.
\end{array}\right.$
Consider $C=\R^2_+$ the nonnegative orthant of $\R^2$ which is obviously a closed convex and pointed cone with nonempty interior.

According to \cite{FQ1}, $g$ is q-usc (and  also o-usc) at $x_0=0$ but not usc at $0.$ We show that $g$ is not a-usc at $x_0=0$ either. Obviously instead of nets one can consider sequences. Indeed, let $x_n\To 0$ and assume that there exists $z_n=(u_n,v_n)\in \R^2,\, z_n\To (u,v)=z\in \R^2$ such that
$g(x_n)-z_n\in-\R^2_+$ and $g(0)-z\in\R^2_+.$ Hence, we have  $1-u_n\le 0$ and $u\le 0$ where $u=\lim_{n\To\infty} u_n$. But this is impossible, which shows that $g$ is not a-usc at $x_0=0$.
\end{example}

Next we introduce another semicontinuity concept which is weaker than a-usc and incomparable with q-usc or o-usc.

\begin{definition} Let $X$ be a Hausdorff space and let $Z$ be a Hausdorff  topological vector space,  let $C\subseteq Z$ be a convex and pointed cone with nonempty interior and let $K$ be a nonempty subset of $X$. One says that the mapping $g :K\longrightarrow{Z}$ is w-upper semicontinuous (briefly w-usc) at $x_0\in K$ if there exists the  nets $(x_\a)\subseteq K,\, \lim x_\a=x_0\in K$ and $z_\a\subseteq Z,\, \lim z_\a=z$ such that $g(x_\a)-z_\a\in -C$ and $g(x_0)-z\in C.$  Similarly, the mapping $g :K\longrightarrow{Z}$ is w-lower semicontinuous (briefly w-lsc) at $x_0\in K$ if $-g$ is w-usc at $x_0.$ One says that $g$ is w-usc (w-lsc) on $K$ if $g$ is w-usc (w-lsc) at every $x\in K.$
\end{definition}

It is obvious that every a-usc map is also w-usc. We have seen by our previous analysis that a-usc implies q-usc. Nevertheless, the w-usc and q-usc properties are incomparable in general, meanwhile in case $Z=\R$ w-upper semicontinuity is the weakest among the above presented notions (which all collapse into upper semicontinuity, meanwhile w-usc is a much weaker property). Indeed, Example \ref{ex3.3} also shows that q-usc or o-usc does not imply w-usc. Next, inspired by Example \ref{ex3.2} we provide  a w-upper semicontinuous map which is not q-upper semicontinuous.

 \begin{example}\label{ex3.4} Let $g: \R \To \R^2$ defined by
$ g(x) =\left\{\begin{array}{ll} (x,2x)\mbox{ if }x\le \frac12,\\ 
(2x, 2x)\mbox{ if }x>\frac12.
\end{array}\right.$
Consider $C=\{(x_1,x_2)\in \R^2: x_1^2\le x_2^2,\,x_2\ge 0\}.$ Obviously $C$ is a closed convex and pointed cone in $\R^2$ with nonempty interior.

For $k=(-1,-1)$ we have $k+g\left(\frac12\right)=\left(-\frac12,0\right)\not\in C.$ On the other hand, for $x\in\left]\frac12,1\right]$ one has
$$k+g(x)=(2x-1,2x-1)\in C.$$
This shows that in any neighbourhood $U$ of $\frac12$ there exists points $x\in U$ (actually for every $x\in U,\,x>\frac12$) for which $k+g(x)\in C.$ Consequently $g$ is not q-usc  at $\frac12.$ Moreover, since $\inte C\neq \emptyset,$ $g$ is not o-usc either at $\frac12.$

We show that $g$ is w-usc at $\frac12.$ Let $x_n=\frac{n}{2n+1}\To\frac12$ and $z_n=\left(\frac{n}{2n+1},\frac{2n}{2n+1}\right)\To \left(\frac12,1\right)=z.$ Then, $g(x_n)-z_n=(0,0)\in -C$ and $g\left(\frac12\right)-z=(0,0)\in C.$ Hence, $g$ is w-usc at $x_0=\frac12.$ Since $g$ is continuous on $\R\setminus\{x_0\}$ we have that $g$ is actually w-usc on $\R.$
\end{example}
\begin{remark}
Let now $Z=\R$ and $C=\R_+$. According to Corollary 5 \cite{FQ1} in this case the concepts of usc, q-usc and o-usc coincide. We show that a-usc also collapse in this case into the usual usc property. Indeed, let $g:X\To\R$ be a function and assume  that $g$ is a-usc at $x_0\in X.$ Then, for every net $(x_\a)\subseteq X,\,x_\a\To x_0$ there exists a net $(z_\a)\subseteq \R,\, z_\a\To z\in \R$ such that
$$g(x_a)\le z_\a\mbox{ and }g(x_0)\ge z\mbox{ for all }\a.$$
But this shows that $g(x_0)\ge z=\limsup z_\a\ge\limsup g(x_\a)$, for every net $x_\a\To x_0$, in other words, $g$ is usc at $x_0.$ We cannot use the same arguments if $g$ is w-usc only, since in this case we assume the existence of a single net $(x_\a)\subseteq X$ with the properties emphasized above. However, we can say that in case of w-usc at $x_0$ one has $g(x_0)\ge \liminf_{x_\a\To x_0} g(x),$ hence this notion may also contain some of the lower semicontinuous functions. Even more, the next example shows that in real-valued case w-usc is weaker than usc.
\end{remark}
\begin{example} Let $g:\R\To\R$ defined by
$ g(x) =\left\{\begin{array}{lll} x,\mbox{ if }x < 0,\\
\frac12,\mbox{ if }x=0,\\
x+1,\mbox{ if }x>0.
\end{array}\right.$
Then, obviously $g$ is not usc at $x_0=0$, but for the sequence $(x_n)\subseteq \R,\,x_n=-\frac1n\To 0$ there exists the sequence $(z_n)\subseteq \R,\,z_n=\frac{1}{n}\To 0=z$, such that $f(x_n)\le z_n$ for all $n\in\N$ and $\frac12=f(x_0)\ge z=0.$
\end{example}

\begin{remark}
One may ask what is the reason for introducing another semicontinuity notion. The utility of the concept a-usc is emphasized by the fact the sum of two maps with this property has the a-usc property. Indeed, let $f,g:K\To Z$ be a-usc at $x_0\in K.$ Consider the net $(x_\a)\subseteq K,\,x_\a\To z_0.$ Then there exist the nets $(z_\a^1),(z_\a^2)\subseteq Z,\, z_\a^1\To z^1,\,z_\a^2\To z^2$ such that $f(x_\a)-z_\a^1,\,g(x_\a)-z_\a^2\in -C$ and $f(x_0)-z^1,\,g(x_0)-z^2\in C.$ Since $C$ is convex one has $(f+g)(x_\a)-(z_\a^1+z_\a^2)\in -C$ and $(f+g)(x_0)-(z^1+z^2)\in C.$ 
The fact that the a-usc  property is closed under addition is very useful when we deal with perturbed problems, where the terms in the sum are usually evaluated separately.  In general, neither the sum of two q-usc, nor the sum of two o-usc maps does not remain q-usc or o-usc as is shown in \cite{FQ1}. Moreover, it can easily be realized that the sum of two w-usc maps need not to be w-usc. Nevertheless, w-usc may also become  useful in the study  of perturbed equilibrium problems as will be emphasized in the next section.

 Another utility of the concept of a-usc is that, according to Lemma \ref{primclosed}, ensures the closedness of the set $G(y)=\{x\in K: g(x,y)\not\in-\inte C\}.$
None of the q-usc, o-usc or w-usc properties can be used in this setting as  the  next examples  show.
\end{remark}
\begin{example} Consider the mapping $g: [-1,1]\times [-1,1] \To \R^2$ defined by
$$ g(x,y) =\left\{\begin{array}{ll} \left(x+y,y -\frac{1}{|x|}\right),\mbox{ if }x \neq 0,\\
(-1+y, -1+y),\mbox{ if }x = 0.
\end{array}\right.$$
Consider $C=\R^2_+$ the nonnegative orthant of $\R^2$ which is obviously a closed convex and pointed cone with nonempty interior.
Then, the map $x\To g(x,0)$ is q-usc (and also o-usc, but not a-usc or w-usc) at $x_0=0$ and the set $G(0)=\{x\in[-1,1]:g(x,0)\not\in-\inte C\}$ is not closed.

Indeed,  $x\To g(x,0)$ is q-usc at $x_0=0$, since for every $k=(k_1,k_2)\in\R^2$ such that $k+g(x_0,0)\not\in C$ there exists $\e>0$ such that $k_2-\frac{1}{|x|}<0$ for all $x\in]-\e,\e[\setminus\{0\}.$ If $k_2\le 0$ one can take $\e>0$ arbitrary, if $k_2>0$ then one can take $\e=\frac{1}{k_2}.$ Consequently, for every $x\in]-\e,\e[$ one has that $k+g(x_0,0)\not\in C$ which shows that
$x\To g(x,0)$ is q-usc at $x_0=0$.

Further, $x\To g(x,0)$ is not a-usc at $x_0=0$. Indeed, assume that for the sequence $(x_n)\subseteq [-1,1],\, x_n\To 0$ there exists the sequence $(z_n)\subseteq \R^2,\,z_n=(u_n,v_n)\To (u,v)=z\in\R^2$ such that $f(x_n,0)-z_n\in-\R^2_+$ and $g(0,0)-z\in\R^2_+.$ This leads to $x_n-u_n\le 0$ and $-1-u\ge 0,$ impossible. Since we considered an arbitrary sequence $(x_n)\subseteq [-1,1],\, x_n\To 0$, actually we have shown that $g$ is not w-usc at $x_0=0.$

On the other hand  $G(0)=]0,1]$ which obviously  is  not closed.
\end{example}

\begin{example} Let $g: \R\times \R \To \R^2$ defined by
$ g(x,y) =\left\{\begin{array}{lll} (-1,y-1)\mbox{ if }x\le -1,\\
(0, y-1)\mbox{ if }x\in\left]-1, 0\right],\\
(1,y-1)\mbox{ if }x>0.
\end{array}\right.$
Consider $C=\R^2_+.$ Obviously $C$ is a closed convex and pointed cone in $\R^2$ with nonempty interior.

It is easy to check that the map $x\To g(x,0)$ is w-usc on $\R.$  On the other hand,
$$G(0)=\{x\in \R:g(x,0)\not\in-\inte C\}=(-1,\infty)$$ is not closed.
\end{example}

\section{Coincidence of solutions}
In this section we give some natural conditions that assure the solution set of \eqref{p2} is included in the solution set of \eqref{p1}. Hence, we can deduce the solution existence of the perturbed weak vector equilibrium problem from the nonemptyness of the solution set of the dual problem. We also show that our conditions are essential, more precisely if we drop one of them, then there might exist solutions of \eqref{p2} which are not solutions of \eqref{p1}.

We start our analysis with the following straightforward result.

\begin{lemma}\label{notin} Let $Z$ be a Hausdorff  topological vector space,  let $C\subseteq Z$ be a convex and pointed cone with nonempty interior. Then, $a\in Z,\,a\not\in-\inte C$ iff there exists $b\in Z,\,b\not\in-\inte C$ such that $a-b\in C.$
\end{lemma}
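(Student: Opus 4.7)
The plan is to prove the equivalence by doing the easy direction via a trivial witness and the nontrivial direction by a contradiction that exploits the property $\inte C + C = \inte C$ already noted in the preliminaries.

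For the forward implication, assume $a \not\in -\inte C$. I would simply take $b = a$ as my witness. Then $b \not\in -\inte C$ holds by hypothesis, and $a - b = 0 \in C$ because $C$ is a cone (so $0 \in C$). That closes this direction with no real work.

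For the reverse implication, suppose there exists $b \in Z$ with $b \not\in -\inte C$ and $a - b \in C$, and assume for contradiction that $a \in -\inte C$. The idea is to write $b = a - (a - b)$, i.e.\ $b = a + (b-a)$, where $a \in -\inte C$ and $b - a \in -C$ (since $a - b \in C$ gives $b - a \in -C$). Then $b \in -\inte C + (-C) = -(\inte C + C) = -\inte C$, using the identity $\inte C + C = \inte C$ stated in the preliminaries. This contradicts $b \not\in -\inte C$, so the assumption $a \in -\inte C$ is false and we conclude $a \not\in -\inte C$.

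I do not expect any real obstacle here; the only subtlety is remembering to invoke the auxiliary identity $\inte C + C = \inte C$ (which is what makes the cone's interior absorb translates by elements of $C$), and being careful about signs when passing between $a - b \in C$ and $b - a \in -C$. Everything else is formal.
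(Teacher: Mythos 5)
Your proposal is correct and matches the paper's proof essentially verbatim: the forward direction uses the trivial witness $b=a$ with $a-b=0\in C$, and the reverse direction derives $b\in a-C\subseteq-\inte C-C=-\inte C$ from the assumption $a\in-\inte C$, contradicting $b\not\in-\inte C$. No issues.
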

\begin{proof} Assume that $a\not\in-\inte C$ and let $b=a.$ Then $b\not\in-\inte C$ and $a-b=0\in C.$

Assume now that  $b\not\in-\inte C$ and $a-b\in C.$ Then $b\in a-C$ and by using the fact that $-\inte C-C=-\inte C$ we get $a\not\in-\inte C$.
\end{proof}

  Let $X$ be a Hausdorff space and let $Z$ be a Hausdorff  topological vector space,  let $C\subseteq Z$ be a convex and pointed cone with nonempty interior and let $K$ be a nonempty subset of $X$. Consider the mappings $f,g :K \times K \longrightarrow{Z}$. Next we provide conditions that assure the solution existence of the perturbed weak vector equilibrium problem.\\

(A$_1$) There exists $x_0\in K$ such that for every fixed $y\in K,\,y\neq x_0$  there exists the nets $(x_\a)\subseteq K,\, \lim x_\a=x_0$ and $(w_\a)\subseteq Z\setminus-\inte C$  such that
$w_\a-f(x_\a,y)-g(x_0,y)\in- C,$ for all $\a$,
and the map $x\To f(x,y)$ is a-upper semicontinuous at $x_0.$\\

(A$_2$) There exists $x_0\in K$ such that for every fixed $y\in K,\,y\neq x_0$  there exists the nets $(y_\a)\subseteq K,\, \lim x_\a=y$ and $(w_\a)\subseteq Z\setminus-\inte C$  such that $w_\a-f(x_0,y_\a)-g(x_0,y)\in- C,$ for all $\a,$ and the map $z\To f(x_0,z)$ is a-upper semicontinuous at $y.$\\

(A$_3$) There exists $x_0\in K$ such that for every fixed $y\in K,\,y\neq x_0$  there exists the nets $(x_\a),(u_\a)\subseteq K,\, \lim x_\a=x_0,\,\lim u_\a=x_0$ and $(w_\a)\subseteq Z\setminus-\inte C$  such that $w_\a-f(x_\a,y)-g(u_\a,y)\in-  C,$ for all $\a,$ and the maps $x\To f(x,y)$  and $x\To g(x,y)$ are  a-upper semicontinuous at $x_0.$\\

(A$_4$) There exists $x_0\in K$ such that for every fixed $y\in K,\,y\neq x_0$ there exist the nets $(y_\a),(v_\a)\subseteq K,$ $\lim y_\a=y,$ $\lim v_\a=y$ and $(w_\a)\subseteq Z\setminus-\inte C$  such that $w_\a-f(x_0,y_\a)-g(x_0,v_\a)\in- C,$ for all $\a,$ and the maps $z\To f(x_0,z)$ and $z\To g(x_0,z)$ are a-upper semicontinuous at $y.$\\

(A$_5$) There exists $x_0\in K$ such that for every fixed $y\in K,\,y\neq x_0$ there exist the nets $(x_\a),(y_\a)\subseteq K,\,\lim x_\a=x_0,\, \lim y_\a=y$  and $(w_\a)\subseteq Z\setminus-\inte C$ such that
$w_\a-f(x_\a,y)-g(x_0,y_\a)\in- C,$ for all $\a$, and the map $x\To f(x,y)$ is a-upper semicontinuous at $x_0$, the map $z\To g(x_0,z)$ is a-upper semicontinuous at $y.$\\
\begin{remark} Note that the role of $f$ and $g$ in (A$_1$),(A$_2$)and (A$_5$) can be interchanged.
A main result of  this section is the following.
\end{remark}
\begin{theorem}\label{principal}   Let $X$ be a Hausdorff space and let $Z$ be a Hausdorff  topological vector space,  let $C\subseteq Z$ be a convex and pointed cone with nonempty interior and let $K$ be a nonempty subset of $X$. Consider the mappings $f,g :K \times K \longrightarrow{Z}$.
Assume that the one of the conditions (A$_1$)-(A$_5$) holds.

Then, $f(x_0,y)+g(x_0,y)\not\in-\inte C$ for all $y\in K,\, y\neq x_0.$
\end{theorem}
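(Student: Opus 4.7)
The plan is to exploit a-upper semicontinuity together with Lemma \ref{notin} and the key identity $-\inte C - C = -\inte C$ (noted at the end of the Preliminaries). All five cases (A$_1$)--(A$_5$) share the same skeleton, so I would prove (A$_5$) in detail and then remark that the other four cases are obtained by either trivializing one of the two a-usc applications or by running both on the same variable.

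Fix $y\in K$ with $y\neq x_0$, and let the nets $(x_\a),(y_\a)\subseteq K$ with $x_\a\to x_0$, $y_\a\to y$, and $(w_\a)\subseteq Z\setminus(-\inte C)$ be provided by (A$_5$), so that $w_\a - f(x_\a,y) - g(x_0,y_\a) \in -C$ for every $\a$. First I would invoke a-usc of $x\mapsto f(x,y)$ at $x_0$ along $(x_\a)$ to obtain nets $(z_\a^f)$ with $z_\a^f\to z^f$, $f(x_\a,y)-z_\a^f\in -C$ and $f(x_0,y)-z^f\in C$. Likewise, a-usc of $z\mapsto g(x_0,z)$ at $y$ along $(y_\a)$ yields $(z_\a^g)$ with $z_\a^g\to z^g$, $g(x_0,y_\a)-z_\a^g\in -C$ and $g(x_0,y)-z^g\in C$. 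Adding the three containments in $-C$ and using that $-C$ is closed under addition gives
\[
w_\a - z_\a^f - z_\a^g \in -C \quad\text{for every } \a.
\]

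Next I would argue, by contradiction, that $z^f+z^g\not\in -\inte C$: if it were, then since $-\inte C$ is open and $z_\a^f+z_\a^g\to z^f+z^g$, we would have $z_\a^f+z_\a^g\in -\inte C$ eventually; combined with the above containment and $-\inte C - C=-\inte C$, this forces $w_\a\in -\inte C$ eventually, contradicting $w_\a\not\in -\inte C$. Finally,
\[
(f(x_0,y)+g(x_0,y)) - (z^f+z^g) = (f(x_0,y)-z^f)+(g(x_0,y)-z^g) \in C+C \subseteq C,
\]
so Lemma \ref{notin}, applied with $a=f(x_0,y)+g(x_0,y)$ and $b=z^f+z^g$, yields $f(x_0,y)+g(x_0,y)\not\in -\inte C$, as desired.

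For (A$_1$) and (A$_2$) only $f$ is evaluated along a variable net; the a-usc step for $g$ collapses to taking the constant net $z_\a^g\equiv g(x_0,y)$, after which the same computation applies verbatim. In (A$_3$) and (A$_4$) both a-usc applications are on the first variable (resp.\ the second) along the two nets $(x_\a),(u_\a)$ (resp.\ $(y_\a),(v_\a)$), and the argument is otherwise identical. The main obstacle is the passage to the limit establishing $z^f+z^g\not\in -\inte C$: the whole argument rests on pushing an openness statement from the limit back to the tail of the approximating net, which requires the genuine \emph{convergence} of $z_\a^f+z_\a^g$ supplied by a-usc. This is exactly the feature that distinguishes a-usc from q-usc, o-usc and w-usc, and explains why the theorem cannot be phrased under any of those weaker notions.
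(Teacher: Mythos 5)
Your proof is correct and follows essentially the same route as the paper's: apply a-upper semicontinuity to replace $f(x_\a,y)$ and $g(x_0,y_\a)$ by convergent nets, use the cone arithmetic $-\inte C-C=-\inte C$ to transfer the non-membership in $-\inte C$ from $(w_\a)$ to the limit, and conclude via the containment $(f(x_0,y)+g(x_0,y))-(z^f+z^g)\in C$. The only difference is cosmetic — you run a single contradiction at the limit using openness of $-\inte C$, whereas the paper first establishes $u_\a+v_\a\not\in-\inte C$ for each $\a$ and then uses closedness of $Z\setminus(-\inte C)$ — and your handling of the remaining cases by trivializing one of the a-usc applications matches the paper's "the other cases can be proved similarly."
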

\begin{proof} We prove for (A$_5$) only, the other cases can be proved similarly. Let $y\in K,\,y\neq x_0$ fixed. Since the map $x\To f(x,y)$ is a-upper semicontinuous at $x_0$ there exists a net $u_\a\subseteq Z,\,u_\a\To u$, such that
$$f(x_\a,y)\in-C+u_\a\mbox{ and } u\in -C+f(x_0,y).$$

Hence, by assumption $w_\a\not\in-\inte C$ for all $\a$ and $$w_\a\in- C+f(x_\a,y)+g(x_0,y_\a)\subseteq -C -C+u_\a+g(x_0,y_\a)=- C+u_\a+g(x_0,y_\a),$$  which shows that $u_\a+g(x_0,y_\a)\not\in-\inte C$ for all $\a$.

Since the map $z\To g(x_0,z)$ is a-upper semicontinuous at $y,$ there exists a net $v_\a\subseteq Z,\,v_\a\To v$, such that
$$g(x_0,y_\a)\in-C+v_\a\mbox{ and } v\in -C+g(x_0,y).$$

Consequently, $u_\a+g(x_0,y_\a)\in-C+v_\a+u_\a$ which combined with  $u_\a+g(x_0,y_\a)\not\in-\inte C$ for all $\a$ leads to
 $$u_\a+v_\a\not\in-\inte C\mbox{ for all }\a.$$

  Hence, $u_\a+v_\a\in Z\setminus-\inte C$ and $Z\setminus-\inte C$ is a closed set,  therefore by taking the limit we obtain that $u+v\in Z\setminus-\inte C,$ or, in other words $u+v\not\in-\inte C.$
On the other hand,
$u+v\in-C+f(x_0,y)-C+g(x_0,y)=-C+(f(x_0,y)+g(x_0,y))$
which shows that $$f(x_0,y)+g(x_0,y)\not\in-\inte C.$$

  Since  $y\neq x_0$ was arbitrary chosen the latter relation holds for every $y\in K\setminus\{x_0\}$.
\end{proof}

\begin{remark}\label{r1} If we assume that $f(x_0,x_0)+g(x_0,x_0)\not\in-\inte C$ in the hypothesis of Theorem \ref{principal}, then its conclusion holds for every $y\in K.$ Note that $f(x_0,x_0)+g(x_0,x_0)\not\in-\inte C$  if, for instance, $f(x_0,x_0)\in C$ and $g(x_0,x_0)\not\in-\inte C.$  Indeed, assume that $f(x_0,x_0)+g(x_0,x_0)\in-\inte C.$ Then, $g(x_0,x_0)\in-\inte C-f(x_0,x_0)\subseteq -\inte C-C=-\inte C,$ which is a contradiction.
\end{remark}

In what follows we provide conditions that assure the solution set of problem (\ref{p2}) is included in the solution set of the problem (\ref{p1}).
Assume that $x_0\in K$ is a solution of \eqref{p2}, that is $g(x_0,y)\not\in-\inte C$ for all $y\in K.$ Then the conditions (A$_1$)-(A$_5$) become the following.\\

(B$_1$) For every fixed $y\in K,\,y\neq x_0$ the map $x\To f(x,y)$ is a-upper semicontinuous at $x_0$ and there exists the nets $(x_\a),(z_\a)\subseteq K,\, \lim x_\a=x_0$   such that
\begin{equation}\label{cond1}g(x_0,z_\a)-f(x_\a,y)-g(x_0,y)\in- C,\mbox{ for all }\a.
\end{equation}

(B$_2$) For every fixed $y\in K,\,y\neq x_0$   the map $z\To f(x_0,z)$ is a-upper semicontinuous at $y$ and there exists the nets $(y_\a),(z_\a)\subseteq K,\, \lim y_\a=y$  such that \begin{equation}\label{cond2} g(x_0,z_\a)-f(x_0,y_\a)-g(x_0,y)\in- C,\mbox{ for all }\a.\end{equation}

(B$_3$) For every fixed $y\in K,\,y\neq x_0$  the maps $x\To f(x,y)$  and $x\To g(x,y)$ are  a-upper semicontinuous at $x_0$ and there exists the nets $(x_\a),(u_\a),(z_\a)\subseteq K,\, \lim x_\a=x_0,\,\lim u_\a=x_0$ such that \begin{equation}\label{cond3} g(x_0,z_\a)-f(x_\a,y)-g(u_\a,y)\in-  C,\mbox{ for all }\a.\end{equation}

(B$_4$) For every fixed $y\in K,\,y\neq x_0$ the maps $z\To f(x_0,z)$ and $z\To g(x_0,z)$ are a-upper semicontinuous at $y$ and there exist the nets $(y_\a),(v_\a),(z_\a)\subseteq K,$ $\lim y_\a=y,$ $\lim v_\a=y$  such that \begin{equation}\label{cond4} g(x_0,z_\a)-f(x_0,y_\a)-g(x_0,v_\a)\in- C,\mbox{ for all }\a.\end{equation}

(B$_5$) For every fixed $y\in K,\,y\neq x_0$ the map $x\To f(x,y)$ is a-upper semicontinuous at $x_0$, the map $z\To g(x_0,z)$ is a-upper semicontinuous at $y$ and there exist the nets $(x_\a),(y_\a),(z_\a)\subseteq K,\,\lim x_\a=x_0,\, \lim y_\a=y$  such that
\begin{equation}\label{cond5} g(x_0,z_\a)-f(x_\a,y)-g(x_0,y_\a)\in- C,\mbox{ for all }\a.\end{equation}

(B$_6$) For every fixed $y\in K,\,y\neq x_0$  the map $x\To g(x,y)$ is a-upper semicontinuous at $x_0$, the map $z\To f(x_0,z)$ is a-upper
semicontinuous at $y$ and there exist the nets $(x_\a),(y_\a),(z_\a)\subseteq K,\,\lim x_\a=x_0,\, \lim y_\a=y$  such that
\begin{equation}\label{cond6} g(x_0,z_\a)-f(x_0,y_\a)-g(x_\a,y)\in- C,\mbox{ for all }\a.\end{equation}

\begin{remark}
Note that we split (A$_5$) in two conditions (B$_5$) and (B$_6$). This is due to the fact that in (A$_5$) the role of $f$ and $g$ can be  interchanged, meanwhile in  (B$_5$) and (B$_6$) $g$ has the role of perturbation bifunction which  furnish the element $x_0$, therefore the semicontinuity conditions assumed at $x_0$ and $y$, respectively, give rise to different conditions.  By using Lemma \ref{notin}, the interested reader may easily rewrite \eqref{cond1}-\eqref{cond6}, taking into account that $g(x_0,z_\a)\not\in-\inte C$ for all $\a.$ An immediate consequence of Theorem \ref{principal} is the following.
\end{remark}
\begin{theorem}\label{t3.0}   Let $X$ be a Hausdorff space and let $Z$ be a Hausdorff  topological vector space,  let $C\subseteq Z$ be a convex and pointed cone with nonempty interior and let $K$ be a nonempty subset of $X$. Consider the mappings $f,g :K \times K \longrightarrow{Z}$. Let $x_0\in K$ such that $g(x_0,y)\not\in-\inte C,$ for all $y\in K$ and assume that $f(x_0,x_0)\in C.$
Assume further, that one of the statements (B$_1$)-(B$_6$)  hold.
Then $f(x_0,y)+g(x_0,y)\not\in-\inte C$ for all $y\in K.$
\end{theorem}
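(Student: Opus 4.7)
The plan is to show that each of (B$_1$)--(B$_6$) already implies the corresponding condition (A$_1$)--(A$_5$) of Theorem \ref{principal} (with the roles of $f$ and $g$ possibly swapped in (A$_5$)), after which I can simply invoke Theorem \ref{principal} and then deal with the diagonal point $y = x_0$ separately. The key observation is that since $x_0$ satisfies $g(x_0, z) \notin -\inte C$ for every $z \in K$, the net $w_\a := g(x_0, z_\a)$ built from any auxiliary net $(z_\a) \subseteq K$ automatically lies in $Z \setminus (-\inte C)$. This is precisely the canonical choice for the free net that the (A)-conditions merely postulate to exist.

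Concretely, I would fix $y \in K$ with $y \neq x_0$, take the nets guaranteed by (B$_i$), and substitute $w_\a := g(x_0, z_\a)$ into the defining inclusions \eqref{cond1}--\eqref{cond6}. The resulting statements coincide verbatim with the inclusions appearing in (A$_1$)--(A$_5$), and the semicontinuity hypotheses in the two lists already match without modification. For (B$_6$) I would use the version of (A$_5$) in which $f$ and $g$ are interchanged, which is valid by the remark preceding Theorem \ref{principal}. Theorem \ref{principal} then yields $f(x_0, y) + g(x_0, y) \notin -\inte C$ for every $y \in K$ with $y \neq x_0$.

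It remains to treat $y = x_0$, and here I would reproduce the reasoning of Remark \ref{r1}: if $f(x_0, x_0) + g(x_0, x_0) \in -\inte C$, then using $f(x_0,x_0) \in C$ gives $g(x_0, x_0) \in -\inte C - f(x_0, x_0) \subseteq -\inte C - C = -\inte C$, which contradicts the dual-problem hypothesis applied at $y = x_0$. I do not expect any serious obstacle; the whole argument hinges on the single observation that the dual hypothesis canonically supplies the net $(w_\a)$ demanded by the (A)-conditions. The only care is the mechanical case-by-case matching of the six conditions, together with noticing that (B$_6$) fits (A$_5$) only after swapping the roles of $f$ and $g$.
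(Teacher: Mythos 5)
Your proposal is correct and coincides with the paper's own proof: the author likewise sets $w_\a=g(x_0,z_\a)$ (which lies in $Z\setminus-\inte C$ by the dual hypothesis) to reduce each (B$_i$) to the corresponding (A)-condition, invokes Theorem \ref{principal}, and settles $y=x_0$ via Remark \ref{r1}. Your observation that (B$_6$) matches (A$_5$) only after interchanging $f$ and $g$ is exactly the point the author makes in the remark on splitting (A$_5$) into (B$_5$) and (B$_6$).
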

\begin{proof} By taking $w_\a=g(x_0\,z_\a)$ for all $\a$ in the hypothesis of Theorem \ref{principal} we get that $f(x_0,y)+g(x_0,y)\not\in-\inte C$ for all $y\in K,\,y\neq x_0.$

Remark \ref{r1} assures that the conclusion also holds for $y=x_0.$
\end{proof}

\begin{remark}\rm If one take a particular net, for instance the net $x_t=(1-t)x_0+t y,\,t\in]0,1[$ in assumptions (B$_1$)-(B$_6$), provided $K$ is a convex subset of the Hausdorff topological vector space $X$, then  Theorem \ref{t3.0} gives rise to some particular instances where  the a-upper semicontinuity  assumption can be weakened. We show this in case (B$_1$), the other cases are left to the interested reader. In this case, the assumption that the map $x\To f(x,y)$ is a-upper semicontinuous at $x_0$ for every $y\in K$ can be replaced by the following condition: if for some $y\in K$ one has $f((1-t)x_0+ty,y)+g(x_0,y)\not\in-\inte C$ for all $t\in]0,1],$ then $f(x_0,y)+g(x_0,y)\not\in-\inte C.$ Moreover, this condition is assured by the hemicontinuity of the mapping $x\To f(x,y)$  for all $y\in K$. Indeed, in this case
$\lim_{t\To 0}(f((1-t)x_0+ty,y)+g(x_0,y))=f(x_0,y)+g(x_0,y)$, hence $f((1-t)x_0+ty,y)+g(x_0,y)\in Z\setminus-\inte C$ implies $f(x_0,y)+g(x_0,y)\in Z\setminus-\inte C$ by taking the limit $t\To 0$ and taking into account that the set $Z\setminus-\inte C$  is closed.
\end{remark}

Consequently the following result holds.

\begin{corollary}\label{c13.0}   Let $X$ and $Z$ be  Hausdorff  topological vector spaces,  let $C\subseteq Z$ be a convex and pointed cone with nonempty interior and let $K$ be a nonempty, convex subset of $X$. Consider the mappings $f,g :K \times K \longrightarrow{Z}$. Let $x_0\in K$ such that $g(x_0,y)\not\in-\inte C,$ for all $y\in K.$
Assume that the following statements hold:
\begin{itemize}
\item[(i)] $f(x,x)\in C$ for all $x\in K,$
\item[(ii)] For every fixed $y\in K,\,y\neq x_0$ and for every  $t\in]0,1[$ there exists $z=z(t,y)\in  K$ such that
$g(x_0,z)-f((1-t)x_0+ty,y)-g(x_0,y)\in- C,$
\item[(iii)] If for some $y\in K$ one has $f((1-t)x_0+ty,y)+g(x_0,y)\not\in-\inte C$ for all $t\in]0,1],$ then $f(x_0,y)+g(x_0,y)\not\in-\inte C.$
\end{itemize}
Then $f(x_0,y)+g(x_0,y)\not\in-\inte C$ for all $y\in K.$
\end{corollary}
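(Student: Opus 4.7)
The plan is to specialize Theorem \ref{t3.0} by selecting the particular segment net $x_t=(1-t)x_0+ty$ with $t\in\,]0,1[$, and to substitute the hemicontinuity-type hypothesis (iii) for the a-upper semicontinuity assumption. Concretely, I would show directly that $f(x_t,y)+g(x_0,y)\not\in-\inte C$ for every $t\in\,]0,1]$, and then invoke (iii) to pass to the limit $t\to 0$ and obtain $f(x_0,y)+g(x_0,y)\not\in-\inte C$.

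First I would dispose of the case $y=x_0$ exactly as in Remark \ref{r1}: (i) gives $f(x_0,x_0)\in C$ and the standing hypothesis on $x_0$ gives $g(x_0,x_0)\not\in-\inte C$; if $f(x_0,x_0)+g(x_0,x_0)\in-\inte C$ were true, then $g(x_0,x_0)\in -\inte C-f(x_0,x_0)\subseteq -\inte C-C=-\inte C$, a contradiction.

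Now fix $y\in K$ with $y\neq x_0$. For $t\in\,]0,1[$, convexity of $K$ gives $x_t:=(1-t)x_0+ty\in K$, so by (ii) there exists $z=z(t,y)\in K$ with
$$g(x_0,z)-f(x_t,y)-g(x_0,y)\in -C,$$
which rearranges to $f(x_t,y)+g(x_0,y)\in g(x_0,z)+C$. Since $x_0$ solves \eqref{p2}, $g(x_0,z)\not\in-\inte C$; together with the inclusion $(Z\setminus-\inte C)+C\subseteq Z\setminus-\inte C$ (equivalent to the identity $\inte C+C=\inte C$ observed in the preliminaries), this forces $f(x_t,y)+g(x_0,y)\not\in-\inte C$ for every $t\in\,]0,1[$. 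For $t=1$ we have $x_1=y$, and (i) gives $f(y,y)\in C$, so $f(y,y)+g(x_0,y)\in g(x_0,y)+C$ and the same set-inclusion argument (applied now to $g(x_0,y)\not\in-\inte C$) yields $f(y,y)+g(x_0,y)\not\in-\inte C$. Thus $f(x_t,y)+g(x_0,y)\not\in-\inte C$ for every $t\in\,]0,1]$, and applying (iii) delivers the conclusion.

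There is no serious obstacle here; the argument is a streamlined specialization of Theorem \ref{t3.0} along a particular net. The only point requiring care is the repeated use of $(Z\setminus-\inte C)+C\subseteq Z\setminus-\inte C$, together with remembering to cover $y=x_0$ separately via (i) and $t=1$ separately via the same cone identity rather than via (ii).
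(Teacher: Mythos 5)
Your proposal is correct and follows essentially the same route as the paper: establish $f((1-t)x_0+ty,y)+g(x_0,y)\not\in-\inte C$ for $t\in\,]0,1[$ from (ii) together with $g(x_0,z)\not\in-\inte C$ (your inclusion $(Z\setminus-\inte C)+C\subseteq Z\setminus-\inte C$ is exactly the content of Lemma \ref{notin}), handle $t=1$ separately via (i), apply (iii), and cover $y=x_0$ as in Remark \ref{r1}. No gaps.
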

\begin{proof} Let $y\in K,\,y\neq x_0$ fixed. Obviously, $g(x_0,z)\not\in-\inte C$ for all $z\in K.$ Lemma \ref{notin} and (ii) assures that \begin{equation}\label{e1}f((1-t)x_0+ty,y)+g(x_0,y)\not\in-\inte C\mbox{ for all }t\in]0,1[.\end{equation}

Even more, (\ref{e1})  also holds for $t=1,$ that is  $f(y,y))+g(x_0,y)\not\in-\inte C.$ Indeed, assume the contrary that is $f(y,y))+g(x_0,y)\in-\inte C.$ From (i) one has $f(y,y)\in C$ hence $g(x_0,y)\in-\inte C-f(y,y))\subseteq -\inte C,$ which contradicts the fact that $g(x_0,y)\not\in-\inte C.$

 Hence, for all $t\in ]0,1]$ one has $f((1-t)x_0+ty,y)+g(x_0,y)\not\in-\inte C.$

 Now by   (iii) we have that $f(x_0,y)+g(x_0,y)\not\in-\inte C.$

 Since the latter relation holds also for $y=x_0$ and $y$ was arbitrary chosen the conclusion follows.
\end{proof}

\begin{remark}\rm
Note that assumption (ii) in the hypothesis of Corollary \ref{c13.0} holds, for instance, if for every $y\in K$ the function $\phi_y:[0,1]\To Z,$ $\phi_y(t)=g(x_0,(1-t)x_0+ty)-f((1-t)x_0+ty,y)$ has a maximum at $t=1$, i.e.,
$$\phi_y(1)-\phi_y(t)\in C,\mbox{ for all }t\in[0,1].$$
Indeed, $\phi_y(1)-\phi_y(t)=g(x_0,y)-f(y,y)-g(x_0,(1-t)x_0+ty)+f((1-t)x_0+ty,y)\in C$ is equivalent to $g(x_0,(1-t)x_0+ty)-f((1-t)x_0+ty,y)-g(x_0,y)\in -f(y,y)-C\subseteq -C$ and one can take $z=(1-t)x_0+ty$ in (ii).
\end{remark}
\begin{remark}
In what follows we show that the  assumptions \eqref{cond1}-\eqref{cond6} in the hypothesis of Theorem \ref{t3.0} are essential. We show this in case of (B$_1$). More precisely, we give an example such that all the assumptions in the hypothesis of Theorem \ref{t3.0} (with (B$_1$)) hold excepting \eqref{cond1}, and for an $x_0\in K$ one has $g(x_0,y)\not\in-\inte C$ for all $y\in K$, but there exists $y_0\in K$ such that $f(x_0,y)+g(x_0,y)\in-\inte C.$
\end{remark}

\begin{example}\rm [see also \cite{L1}, Example 3.2] \label{ex3.1} Let us consider the real valued functions of one real variable $\phi,\,\psi: K\To K,$ where $K=[-1,1]$ and \\ $\phi(x)=\left\{
\begin{array}{lll}
-2x-1,\,\mbox{if\,}\, x\in\left[-1,-\frac12\right],\\
2x+1,\,\mbox{if\,}\, x\in\left(-\frac12,0\right],\\
-2x+1,\,\mbox{if\,}\,x\in\left(0,1\right],\\
\end{array}
\right.$
$\psi(x)=\left\{
\begin{array}{ll}
-\frac23 x+\frac13,\,\mbox{if,}\, x\in\left[-1,\frac12\right],\\
-2x+1,\,\mbox{if,}\, x\in\left(\frac12,1\right].\\
\end{array}
\right.$
\end{example}
Consider the bifunctions $f,g:K\times K\To \R^2,$ $f(x,y)=\left(\phi(y)\psi(x),0\right),$  and $g(x,y)=\left(-\phi(x)\psi(x),\phi(y)\psi(x)-\phi(x)\psi(x)\right).$
Further, consider $C=\R_+^2=\{(x_1,x_2)\in\R^2:x_1\ge 0,\,x_2\ge0\}$ the nonnegative orthant of $\R^2$, which is obviously a convex and pointed cone, and
$\inte C= \{(x_1,x_2)\in\R^2:x_1> 0,\,x_2>0\}$. Hence, $(x,y)\in-\inte C$ iff $x<0$ and $y<0.$ We  consider the problem (\ref{p1}) and (\ref{p2}) defined by the bifunctions $f$ and $g$ and by the cone $C.$
Obviously the set $K=[-1,1]$ is convex and even compact, and $\phi(x)\psi(x)\ge 0$ for all $x\in K.$

  We show that  $x_0=-\frac12\in K$ is a solution of the (\ref{p2}). Indeed, $$g(x_0,y)=\left(-\phi(x_0)\psi(x_0),\phi(y)\psi(x_0)-\phi(x_0)\psi(x)\right)=\left(0, \frac23\phi(y)\right)\not\in-\inte C,\,\forall y\in K.$$

Further, $f(x,x)=(\phi(x)\psi(x),0)\in C$ for all $x\in K$.

Since the functions $\phi$ and $\psi$ are continuous, the map  $x\To f(x,y)$ is C-upper semicontiuous for  every fixed $y\in K$, hence, according to Remark \ref{ab}  the map  $x\To f(x,y)$ is  a-upper semicontinuous for  every fixed $y\in K$.

 We show that \eqref{cond1} in the hypothesis of Theorem \ref{t3.0} does not hold for $x_0=-\frac12$. Obviously, instead of nets we can consider sequences. Let $(x_n)\subseteq K,\, \lim_{n\To\infty}x_n=x_0.$ Let $y=\frac34$ and $(z_n)\subseteq K.$ Then,
  $$g(x_0,z_n)-f(x_n,y)-g(x_0,y)=\left(\frac12\psi(x_n),\left(\phi(z_n)+\frac12\right)\cdot\frac23\right).$$
  Since $\psi$ is continuous and $\psi(x_0)=\frac23>0$, there exists some $n_0\in\N$ such that $\psi(x_n)>0$ for all $n\ge n_0.$
  This shows that $$\left(\frac12\psi(x_n),\left(\phi(z_n)+\frac12\right)\cdot\frac23\right)\not\in- C,\,\forall (z_n)\subseteq K,\,\forall n\ge n_0,$$
  in other words \eqref{cond1} fails to hold.

 We show that  $x_0=-\frac12\in K$  is not a solution of (\ref{p1}).
Indeed,  for $y=\frac34\in K$ we obtain
$$f(x_0,y)+g(x_0,y)=\left(-\frac13,-\frac13\right)\in-\inte C.$$

\begin{remark} The next example shows that the a-upper semicontinuity assumptions in (B$_1$)-(B$_6$) in the hypothesis of Theorem \ref{t3.0} are essential. We show this in case of (B$_1$). More precisely, we give an example such that all the assumptions in the hypothesis of Theorem \ref{t3.0} (with (B$_1$)) hold excepting the map $x\To f(x,y)$ is a-upper semicontinuous at $x_0\in K$ for some $y\in K$, and one has $g(x_0,y)\not\in-\inte C$ for all $y\in K$, but there exists $y_0\in K$ such that $f(x_0,y_0)+g(x_0,y_0)\in-\inte C.$
\end{remark}

\begin{example} Consider the mappings $f,g: [-1,1]\times [-1,1] \To \R^2$ defined by\\

$ f(x,y) =\left\{\begin{array}{ll} \left(-1-x-y,x-y\right),\mbox{ if }x \in\left[-1,-\frac12\right],\\
\left(x+|y|,-x+y\right),\mbox{ if }x \in\left]-\frac12,1\right],
\end{array}\right.$ and $g(x,y)=(-1-x-|y|,|y|).$

Consider $C=\R^2_+$ the nonnegative orthant of $\R^2$ which is obviously a closed convex and pointed cone with nonempty interior. Further, the set $K=[-1,1]$ is convex and compact.

Then, for $x_0=-\frac12$ one has $g(x_0,y)\not\in-\inte C$ for all $y\in K$ and $f(x_0,x_0)=(0,0)\in C.$

Further, for any $y\in K,\,y\neq x_0$ fixed and for $(x_n)\subseteq K,\,x_n=-\frac{n}{2n+1}\To -\frac12$ and $(z_n)\subseteq K,\,z_n=\frac{n}{2n+1},$ one has
$g(x_0,z_n)-f(x_n,y)-g(x_0,y)=(0,-y-|y|)\in-C.$

We show, that for $y=0$ the map $x\To f(x,y)$ is not a-upper semicontinuous at $x_0.$

In other words, we show that for every sequence $(w_n)=(w_n^1,w_n^2)\subseteq \R^2,\,w_n\To (w^1,w^2)$ one has
$$f(x_n,0)-w_n\in -C\Longrightarrow f(x_0,0)-(w^1,w^2)\not\in C.$$

Indeed, let $(w_n)=(w_n^1,w_n^2)\subseteq \R^2,\,w_n\To (w^1,w^2)$ and assume that $$f(x_n,0)-w_n=\left(-\frac{n}{2n+1}-w_n^1,\frac{n}{2n+1}-w_n^2\right)\in -C.$$
Then $w^1\ge -\frac12$ and $w^2\ge \frac12.$

On the other hand $f(x_0,0)-(w^1,w^2)=\left(-\frac12-w^1,-\frac12-w^2\right)$ and since  $-\frac12-w^2\le-1$ we get that $f(x_0,0)-(w^1,w^2)\not\in C.$

Hence, all the assumptions in the hypothesis of Theorem \ref{t3.0} (with (B$_1$)) hold excepting the map $x\To f(x,y)$ is a-upper semicontinuous at $x_0$ for  $y=0.$ We show that there exists $y_0\in K$ such that $f(x_0,y_0)+g(x_0,y_0)\in-\inte C.$ Indeed, let $y_0=0.$ Then,
$$f(x_0,y_0)+g(x_0,y_0)=\left(-1,-\frac12\right)\in-\inte C.$$
\end{example}

\begin{remark} Note that Theorem \ref{t3.0} is a great theoretical result and assures solution existence of the perturbed problem (\ref{p1}) via some very simple conditions imposed on the bifunction $f.$ However, we need a priori to now a solution of (\ref{p2}), the weak vector equilibrium problem governed by the perturbation bifunction $g.$ In what follows  based on the a-upper semicontinuity notion we obtain the solution existence of problem \eqref{p1} and \eqref{p2}. We work in a noncompact setting, hence we need to use a coercivity  condition.
\end{remark}

Inspired from \cite{kazmi},  we will work with a coercivity condition concerning a compact set and its algebraic interior.
Let $X$  be a Hausdorff  topological vector space, let $U,V\subseteq X$ be convex sets and assume that $U\subseteq V$. We recall that the algebraic interior of $U$ relative to
$V$ is defined as
$$\mbox{core}_V U=\{u\in U: U\cap ]u,v]\neq\emptyset,\,\forall v\in V \}$$ Note that $\mbox{core}_V V = V.$
Our coercivity condition concerning the problem (\ref{p2}) becomes:

  There exists  $K_0\subseteq K$ nonempty, compact and convex such that for every $x\in K_0\setminus\mbox{core}_K K_0$ there exists an $y_0\in \mbox{core}_K K_0$ such that $g(x,y_0)\in -C.$

In the following results we use the coercivity conditions emphasized above and we  drop the  closedness condition on $K$ which is usually  assumed in  the equilibrium problems from the literature. Moreover, our conditions are assumed relative to this compact set $K_0$.

Another main  result of this section is the following.

\begin{theorem}\label{t37} Let $X$ and $Z$ be  Hausdorff  topological vector spaces,  let $C\subseteq Z$ be a convex and pointed cone with nonempty interior and let $K$ be a nonempty, convex subset of $X$. Consider the mappings $f,g :K \times K \longrightarrow{Z}$  and assume that the following
conditions are fulfilled.
\begin{itemize}
\item[(i)] There exists a nonempty compact convex subset $K_0$ of $K$ with the property that for every $x\in K_0\setminus\mbox{core}_K K_0,$ there exists an $y_0\in \mbox{core}_K K_0$ such that $g(x,y_0)\in -C.$
\item[(ii)]For all $y\in K_0,$ the mapping $x\To g(x,y)$ is a-usc on $K_0,$ the mapping $y\longrightarrow g(x,y)$ is C-convex on $K_0$ and $g(x,x)\not\in -\inte C$ for all $x\in K_0.$
\end{itemize}
Then, there exists $x_0\in K$ such that $g(x_0,y)\not\in-\inte C$ for all $y\in K.$
Moreover, assume that the following conditions hold.
\begin{itemize}
\item[(iii)]  One of the conditions (B$_1$)-(B$_6$)  is satisfied and $f(x_0,x_0)\in C$.
\end{itemize}

Then, $f(x_0,y)+g(x_0,y)\not\in -\inte{C}$ for all $y\in K.$
\end{theorem}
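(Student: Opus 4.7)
\emph{Proof plan.} My approach is to produce a solution $x_0$ of the dual problem \eqref{p2} by a Ky Fan--KKM argument on the compact convex set $K_0$, extend the conclusion from $K_0$ to all of $K$ via the coercivity condition (i), and then feed $x_0$ into Theorem \ref{t3.0} to get the perturbed statement.

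For each $y \in K$ I would set $G(y) = \{x \in K_0 : g(x,y) \not\in -\inte C\}$. When $y \in K_0$, the $a$-upper semicontinuity of $x \mapsto g(x,y)$ on $K_0$ from (ii), combined with part (b) of Lemma \ref{primclosed} applied to the closed (actually compact) set $K_0$, makes $G(y)$ closed in the compact set $K_0$. The KKM property on $K_0$ then follows from the $C$-convexity of $y \mapsto g(x,y)$ and the diagonal assumption $g(x,x) \not\in -\inte C$: if $z = \sum_{i=1}^n \lambda_i y_i$ is a convex combination of $y_1,\dots,y_n \in K_0$ and $g(z,y_i) \in -\inte C$ for every $i$, then
$$g(z,z) \in \sum_{i=1}^n \lambda_i g(z,y_i) - C \subseteq -\inte C - C = -\inte C,$$
contradicting (ii). Applying the Ky Fan--KKM lemma on the compact convex set $K_0$ produces some $x_0 \in \bigcap_{y \in K_0} G(y)$.

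The extension from $K_0$ to $K$ runs by contradiction. Suppose $g(x_0,y) \in -\inte C$ for some $y \in K$. If $x_0 \in K_0 \setminus \core_K K_0$, the coercivity condition (i) supplies a reference point $y_0 \in \core_K K_0$ with $g(x_0,y_0) \in -C$; the case $x_0 \in \core_K K_0$ is handled by an analogous segment argument anchored at a suitable $y_0 \in \core_K K_0$ via the $C$-convexity of $y \mapsto g(x_0,y)$ together with $g(x_0,x_0) \not\in -\inte C$. The defining property of $\core_K K_0$ then yields $s \in (0,1]$ such that $z := (1-s)y_0 + sy \in K_0$, so that $g(x_0,z) \not\in -\inte C$. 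On the other hand, $C$-convexity of $y' \mapsto g(x_0,y')$ gives
$$g(x_0,z) \in (1-s) g(x_0,y_0) + s\,g(x_0,y) - C \subseteq (-C) + (-\inte C) + (-C) = -\inte C,$$
where I repeatedly use the absorption identity $\inte C + C = \inte C$. This is the desired contradiction, so $g(x_0,y) \not\in -\inte C$ for every $y \in K$, i.e.\ $x_0$ solves \eqref{p2}.

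For the second assertion, $x_0$ is now a solution of \eqref{p2}, and together with $f(x_0,x_0) \in C$ and whichever of (B$_1$)--(B$_6$) is supplied by (iii), this places us exactly in the hypothesis of Theorem \ref{t3.0}; that theorem directly delivers $f(x_0,y) + g(x_0,y) \not\in -\inte C$ for all $y \in K$. The main obstacle in the plan is the extension step: the Ky Fan--KKM output lives only on $K_0$, and propagating the conclusion to the potentially much larger set $K$ forces a careful interplay between the coercivity (i), the $C$-convexity of $y \mapsto g(x_0,y)$, and the algebraic identity $-\inte C + (-C) = -\inte C$, with the $\core_K K_0$ versus $K_0 \setminus \core_K K_0$ dichotomy dictating how the reference point $y_0$ is chosen.
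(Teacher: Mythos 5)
Your plan reproduces the paper's proof almost step for step: existence of a dual solution on the compact convex set $K_0$ via a KKM-type argument (closedness of $G(y)$ from Lemma \ref{primclosed}(b), the KKM property from $C$-convexity and the diagonal condition $g(x,x)\not\in-\inte C$), extension from $K_0$ to all of $K$ through the coercivity condition and the definition of $\core_K K_0$, and finally Theorem \ref{t3.0} to pass to the perturbed problem. The only presentational difference is in the first step: you invoke the Ky Fan--KKM intersection lemma as a black box, whereas the paper unwinds that lemma, covering $K_0$ by the open sets $V_y=K_0\setminus G(y)$, extracting a finite subcover, and applying a continuous partition of unity together with Brouwer's fixed point theorem to reach the same contradiction $g(y_0,y_0)\in-\inte C$. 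These are the same argument in different packaging, not a genuinely different route.

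The one place where your sketch is substantively too thin is the case $x_0\in\core_K K_0$ in the extension step. The segment argument needs an anchor $y_0\in\core_K K_0$ with $g(x_0,y_0)\in -C$, so that $(1-s)g(x_0,y_0)+s\,g(x_0,y)\in -C-\inte C=-\inte C$; the property $g(x_0,x_0)\not\in-\inte C$ that you propose to use instead points the wrong way and cannot replace it, since $(1-s)g(x_0,x_0)+s\,g(x_0,y)-C$ need not lie in $-\inte C$ when $g(x_0,x_0)$ is, say, a large element of $C$. Hypothesis (i) supplies such an anchor only for $x_0\in K_0\setminus\core_K K_0$. To be fair, the paper's own proof has the same soft spot: it sets $z_0=x_0$ in this case and asserts that $g(x_0,z_0)\in-C$ ``follows from (i)'', which it does not. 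The clean repair is to assume in addition $g(x,x)\in-C$ (e.g. the usual $g(x,x)=0$), or to state the coercivity condition for all $x\in K_0$. Apart from this blemish, which you share with the paper, your argument is correct.
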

\begin{proof} We show at first that \eqref{p2} has a solution in $K_0$, that is, there exists $x_0\in K_0$ such that $g(x_0,y)\not\in-\inte C$ for all $y\in K_0.$

Assume the contrary, that is, for every $x\in K_0$ there exists $y\in K_0$ such that $g(x,y)\in-\inte C.$ Then, for every $y\in K_0$ consider $V_y=\{x\in K_0: g(x,y)\in-\inte C\}.$ It is obvious that $K_0\subseteq \cup_{y\in K_0} V_y.$ We show that $(V_y)_{y\in K_0}$ is an open cover of $K_0.$ First of all observe that for all $y\in K_0,$ one has  $V_y=K_0\setminus G(y)$, where $G(y)$ is the set $\{x\in K_0:g(x,y)\not\in-\inte C\}.$ But, by assumption (ii), for all $y\in K_0,$ the mapping $x\To g(x,y)$ is a-usc on $K_0,$ hence according to Lemma \ref{primclosed} $G(y)$ is closed for all $y\in K_0.$

Consequently, $(V_y)_{y\in K_0}$ is an open cover of the compact set $K_0$, in conclusion it contains a finite subcover. In other words, there exists $y_1,y_2,...,y_n\in K_0$ such that $K_0\subseteq \cup_{i=1}^n V_{y_i}.$ Consider $\big(p_i\big)_{i=\overline{1,n}}$  a continuous partition of  unity associated to the open cover $\big(V_{y_i}\big)_{i=\overline{1,n}}$. Then $p_i:K_0\To[0,1]$ is continuous and $\mbox{supp}(p_i)=\cl\{x\in K_0: p_i(x)\neq 0\}\subseteq V_{y_i}$ for all $i\in\{1,2,...,n\}$, moreover $\sum_{i=1}^n p_i(x)=1,$ for all $x\in K_0.$

Consider the mapping $\varphi:\co\{y_1,y_2,...,y_n\}\To\co\{y_1,y_2,...,y_n\},$
$$\varphi(x)=\sum_{i=1}^n p_i(x)y_i.$$
Obviously $\varphi$ is continuous, and $\co\{y_1,y_2,...,y_n\}$ is a compact and convex subset of the  finite dimensional  space $\mbox{span}\{y_1,y_2,...,y_n\}.$ Hence, by the Brouwer fixed point theorem, there exists $x_0\in \co\{y_1,y_2,...,y_n\}$ such that $\varphi(x_0)=x_0.$

Let $J=\{i\in\{1,2,...,n\}:p_i(x_0)>0\}.$ Obviously $J$ is nonempty, since $\sum_{i\in J} p_i(x_0)=1,$ and $$\varphi(x_0)=\sum_{i=1}^n p_i(x_0)y_i=\sum_{i\in J}p_i(x_0)y_i=x_0.$$
The latter  equality shows, that $x_0\in\co\{y_i:i\in J\}.$ On the other hand, from $p_i(x_0)>0$ for all $i\in J$ we obtain that $x_0\in\cap_{i\in J}V_{y_i}.$ Since $\cap_{i\in J}V_{y_i}$ is open, we  obtain  that there exists $U$ an open and convex neighbourhood of $x_0$ such that $U\subseteq \cap_{i\in J}V_{y_i}.$ Since $\co\{y_i:i\in J\}\cap U\neq\emptyset$, we have that there exists  $y_0=\sum_{i\in J}\l_i y_i\in \co\{y_i:i\in J\}\cap U$, where $\l_i\ge0$ for all $i\in J$ and $\sum_{i\in J}\l_i=1$. By (ii), in the hypothesis of the theorem, one gets $g(y_0,y_0)\not\in-\inte C$. On the other hand, by using the assumption that the map $y\longrightarrow g(x,y)$ is C-convex on $K_0$ we get
$$g(y_0,y_0) =g(y_0,\sum_{i\in J}\l_i y_i)\le\sum_{i\in J}\l_i g(y_0,y_i),$$
which shows that $\sum_{i\in J}\l_i g(y_0,y_i)-g(y_0,y_0)\in C.$ But, $y_0\in U$, thus $g(y_0,y_i)\in-\inte C,$ for all $i\in J.$ Hence $\sum_{i\in J}\l_i g(y_0,y_i)\in -\inte C,$ which leads to $$-g(y_0,y_0)\in C-\sum_{i\in J}\l_i g(y_0,y_i)\subseteq\inte C,$$ contradiction.
In conclusion there exists $x_0\in K_0$ such that $g(x_0,y)\not\in-\inte C$ for all $y\in K_0.$

We show next that $g(x_0,y)\not\in-\inte C$ for all $y\in K.$ First we show, that there exists $z_0\in \mbox{core}_K K_0$ such that $g(x_0,z_0)\in -C.$ Indeed, if $x_0\in \mbox{core}_K K_0$ then let $z_0=x_0$ and the conclusion follows from (i). Assume now, that $x_0\in K_0\setminus\mbox{core}_K K_0. $ Then, according to (i),  there exists $z_0\in \mbox{core}_K K_0$ such that $g(x_0,z_0)\in -C.$

Let $y\in K,\, y\neq z_0.$ Then, since $z_0\in \mbox{core}_K K_0$, there exists $\l\in]0,1]$ such that $\l z_0 +(1-\l)y\in K_0,$ consequently $g(x_0,\l z_0 +(1-\l)y)\not\in-\inte C.$ From (ii) we have
$$\l g(x_0,z_0)+(1-\l)g(x_0,y)-g(x_0,\l z_0 +(1-\l)y)\in C$$
or, equivalently
$$(1-\l)g(x_0,y)-g(x_0,\l z_0 +(1-\l)y)\in C-\l g(x_0,z_0)\subseteq C.$$
Assume that $g(x_0,y)\in-\inte C.$ Then, $$-g(x_0,\l z_0 +(1-\l)y)\in -(1-\l)g(x_0,y)+ C\subseteq \inte C,$$ in other words
$$g(x_0,\l z_0 +(1-\l)y)\in-\inte C,$$
contradiction. Hence,
$g(x_0,y)\not\in-\inte C,$ for all $y\in K.$

According  to Theorem \ref{t3.0}, (iii) assures that
$$f(x_0,y)+g(x_0,y)\not\in-\inte C,\mbox{ for all }y\in K.$$
\end{proof}

\begin{remark} Observe that we can assume our coercivity condition (i) relative to $f+g$ and then  the assumptions in (iii) can be put relative to the set $K_0$. More precisely the following result holds.
\end{remark}
\begin{theorem}\label{t39} Let $X$ and $Z$ be  Hausdorff  topological vector spaces,  let $C\subseteq Z$ be a convex and pointed cone with nonempty interior and let $K$ be a nonempty, convex subset of $X$. Consider the mappings $f,g :K \times K \longrightarrow{Z}$  and assume that the following
conditions are fulfilled.
\begin{itemize}
\item[(i)] There exists a nonempty compact convex subset $K_0$ of $K$ with the property that for every $x\in K_0\setminus\mbox{core}_K K_0,$ there exists an $y_0\in \mbox{core}_K K_0$ such that $f(x,y_0)+g(x,y_0)\in -C.$
\item[(ii)]For all $y\in K_0,$ the mapping $x\To g(x,y)$ is a-usc on $K_0,$ , for all $x\in K_0,$ the mapping $y\longrightarrow g(x,y)$ is C-convex on $K_0$ and $g(x,x)\not\in-\inte C$ for all $x\in K_0.$
\end{itemize}
Then, there exists $x_0\in K$ such that $g(x_0,y)\not\in-\inte C$ for all $y\in K.$
Moreover, assume that the following conditions hold.
\begin{itemize}
\item[(iii)] One of the conditions (B$_1$)-(B$_6$)  assumed for the restriction of $f$ and $g$ on $K_0\times K_0$ is satisfied and $f(x_0,x_0)\in C$.
\end{itemize}

Then, $f(x_0,y)+g(x_0,y)\not\in -\inte{C}$ for all $y\in K.$
\end{theorem}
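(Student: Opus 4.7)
My plan is to mirror the three-step structure of the proof of Theorem \ref{t37}, modifying only the two places where the present hypotheses differ. First I would produce $x_0 \in K_0$ with $g(x_0, y) \not\in -\inte C$ for every $y \in K_0$ exactly as in Theorem \ref{t37}: the sets $G(y) = \{x \in K_0 : g(x,y) \not\in -\inte C\}$ are closed by Lemma \ref{primclosed} (using the a-upper semicontinuity assumed in (ii)); if the conclusion fails, the open sets $V_y = K_0 \setminus G(y)$ cover the compact set $K_0$, a finite subcover $V_{y_1}, \ldots, V_{y_n}$ together with a continuous partition of unity $(p_i)$ defines a continuous self-map $\varphi(x) = \sum_i p_i(x) y_i$ of the compact convex polytope $\co\{y_1, \ldots, y_n\}$, Brouwer yields a fixed point, and the C-convexity of $y \mapsto g(x,y)$ combined with the diagonal condition $g(x,x) \not\in -\inte C$ produces the usual contradiction.

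Next I would feed $x_0$ into Theorem \ref{t3.0} with the ambient set taken to be $K_0$ instead of $K$. This substitution is legitimate precisely because condition (iii) is phrased on the restrictions of $f$ and $g$ to $K_0 \times K_0$; combined with $f(x_0, x_0) \in C$ and the dual inequality just obtained, Theorem \ref{t3.0} yields $(f+g)(x_0, y) \not\in -\inte C$ for every $y \in K_0$. The third step is to extend the two conclusions from $K_0$ to all of $K$ using the coercivity (i). Following the second half of the proof of Theorem \ref{t37}, I would take $z_0 := x_0$ when $x_0 \in \core_K K_0$, and otherwise let $z_0 \in \core_K K_0$ be the element supplied by (i) with $(f+g)(x_0, z_0) \in -C$. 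For any $y \in K$, the core property gives $\lambda \in ]0,1]$ with $\lambda z_0 + (1-\lambda) y \in K_0$, and the C-convexity of $g(x_0, \cdot)$ together with the algebraic fact $-\inte C - C = -\inte C$ propagates the non-membership inequalities back to $y$ in exactly the same way as in the concluding computation of Theorem \ref{t37}.

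The main obstacle I expect is in this extension step: condition (i) provides $-C$-control only for the sum $f+g$ at $z_0$, whereas the first claimed conclusion (the pure $g$-statement on all of $K$) formally wants $g(x_0, z_0) \in -C$ in order to run the C-convexity chain for $g$ alone. I plan to bridge this gap by first completing the extension for $f+g$ --- which directly uses (i) and is exactly the second claimed conclusion --- and then recovering the $g$-statement by running the C-convexity argument along the segment $[z_0, y]$, exploiting $f(x_0, x_0) \in C$ and the specific form of the chosen hypothesis among (B$_1$)--(B$_6$) so that the $f$-contribution is absorbed into $C$ at the relevant endpoint. Once this transfer from $f+g$ back to $g$ on $K_0 \cup \{y\}$ is in place, the remainder of the argument is a direct translation of the closing paragraph of the proof of Theorem \ref{t37}.
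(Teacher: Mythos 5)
Your three-step skeleton is exactly the paper's: the published proof of Theorem \ref{t39} consists of (1) the first half of the proof of Theorem \ref{t37} (closedness of the sets $G(y)$ via Lemma \ref{primclosed}, finite subcover, partition of unity, Brouwer) to get $x_0\in K_0$ with $g(x_0,y)\not\in-\inte C$ for all $y\in K_0$, (2) an application of Theorem \ref{t3.0} restricted to $K_0\times K_0$ to get $f(x_0,y)+g(x_0,y)\not\in-\inte C$ for all $y\in K_0$, and (3) the closing segment argument of Theorem \ref{t37} ``by replacing $g$ with $f+g$''. Your first two steps reproduce this faithfully and are fine.

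The trouble is the extension step, which you have half-diagnosed but not repaired. To run the concluding computation of Theorem \ref{t37} for the sum --- i.e.\ to pass from $(f+g)(x_0,w)\not\in-\inte C$ for $w=(1-t)z_0+ty\in K_0$ and $(f+g)(x_0,z_0)\in -C$ to $(f+g)(x_0,y)\not\in-\inte C$ --- one needs the inclusion $(1-t)(f+g)(x_0,z_0)+t(f+g)(x_0,y)-(f+g)(x_0,w)\in C$, that is, $C$-convexity of $y\mapsto f(x_0,y)+g(x_0,y)$. Hypothesis (ii) only gives $C$-convexity of $g(x_0,\cdot)$, and your text explicitly invokes ``the C-convexity of $g(x_0,\cdot)$'' for this propagation; that is not enough, because $f(x_0,\cdot)$ carries no convexity assumption whatsoever. (To be fair, this defect is already latent in the paper's one-line ``replacing $g$ with $f+g$''; a clean fix is to add $C$-convexity of $f(x,\cdot)$, or of the sum, to the hypotheses.) Second, your plan to recover the pure $g$-statement on all of $K$ by ``absorbing the $f$-contribution into $C$ at the relevant endpoint'' using the chosen condition among (B$_1$)--(B$_6$) cannot work: those are net/semicontinuity conditions and give no sign information on $f(x_0,z_0)$, while coercivity (i) controls only the sum $f+g$ at $z_0$. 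The paper does not attempt this either --- its proof establishes $g(x_0,y)\not\in-\inte C$ only for $y\in K_0$, so the first displayed conclusion of the theorem should be read (or corrected) as holding on $K_0$ rather than on all of $K$.
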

\begin{proof} One can show that there exists $x_0\in K_0$ such that $g(x_0,y)\not\in-\inte C$ for all $y\in K_0$ as in the proof of Theorem \ref{t37}.

In virtue of Theorem \ref{t3.0}, (iii) assures that $f(x_0,y)+g(x_0,y)\not\in-\inte C$ for all $y\in K_0$. The rest of the proof is  analogous to the proof of Theorem \ref{t37}, by replacing $g$ with $f+g$.
\end{proof}


\begin{thebibliography}{99}
\bibitem{A}  Q.H. Ansari, Vector equilibrium problems and vector variational inequalities. In:  Giannessi, F. (ed.) Vector variational inequalities and vector equilibria,  pp. 1-15. Kluwer Academic Publishers, Dordrecht, 2000.
\bibitem{AKY}  Q.H. Ansari, I.V. Konnov, J.C.  Yao,  Existence of a solution and variational principles for vector equilibrium problems, J. Optim. Theory Appl. {110} (2001) 481-492.
\bibitem{AKY1}  Q.H. Ansari, I.V. Konnov, J.C.  Yao, Characterizations of solutions for vector equilibrium problems, J. Optim. Theory Appl. {113} (2002) 435-447.
\bibitem{AOS} Q.H. Ansari, W. Oettli, D. Schl\"ager, A generalization of vectorial equilibria, Mathematical Methods of Operations Research {46} (1997) 147-152.
\bibitem{ACY}  Q.H. Ansari, W.K. Chan, X.Q. Yang,  Weighted Quasi-Variational Inequalities and Constrained Nash Equilibrium Problems,
Taiwanese Journal of Mathematics {10} (2006) 361-380.
\bibitem{AK} Q.H. Ansari, Z. Khan,  Densely Relative Pseudomonotone Variational Inequalities over Product of Sets,
Journal of Nonlinear and Convex Analysis {7} (2006) 179-188.

\bibitem{BKP}  M. Bianchi, G. Kassay, R. Pini, Existence of equilibria via Ekeland’s principle, J. Math. Anal. Appl. 305 (2005) 502-512.
\bibitem{BKP1} M. Bianchi, G. Kassay, R. Pini, Ekeland’s principle for vector equilibrium problems, Nonlinear Anal. 66 (2007) 1459-1464.
\bibitem{BO}  E. Blum, W. Oettli, From optimization and variational inequalities to equilibrium problems, Math. Stud. {63} (1994) 123-145.
\bibitem{BPT} J. Borwein, J. Penot and M. Th\'era, Conjugate convex operators, J. Math. Anal. Appl. 102 (1984), 399-414.
\bibitem{durea} M. Durea, On the existence and stability of approximate solutions of perturbed vector equilibrium problems, J. Math. Anal. Appl. 333 (2007) 1165-1179.

\bibitem{Fan1}  K. Fan, {A minimax inequality and its application}. In: O. Shisha (ed.) Inequalities, Vol. 3, pp. 103-113. Academic Press, New York, 1972.
\bibitem{Fan}  K. Fan,  {A generalization of Tychonoff's fixed point theorem,} Math.Ann. {142} (1961) 305-310.
\bibitem{Fe}  R. Ferrentino,  {Variational Inequalities and Optimization Problems,} Applied Mathematical Sciences 1   (2007) 2327-2343.
\bibitem{FQ} C. Finet, L. Quarta, Vector-valued perturbed equilibrium problems, J. Math. Anal. Appl. 343 (2008) 531-545.
\bibitem{FQ1} C. Finet, L. Quarta, C. Troestler, Vector-valued variational principles, Nonlinear Analysis: Theory, Methods \& Applications 52 (2003), 197-218
\bibitem{G1}  F. Giannessi (ed.), Vector variational inequalities and vector equilibria, Mathematical theories, Kluwer Academic Publishers, Dordrecht, 2000.
\bibitem{Go1} X. Gong,  Efficiency and Henig efficiency for vector equilibrium problems, J. Optim. Theory Appl. {108} (2001) 139-154.
\bibitem{G} X. Gong,  Strong vector equilibrium problems, Journal of  Global Optimization {36} (2006) 339-349.
\bibitem{GRTZ}  A.  G\"opfert, H.  Riahi, C.  Tammer, C. Z\u alinescu, Variational Methods in Partially Ordered Spaces. Springer, New York, 2003.
\bibitem{In} D. Inoan, An existence result for a variational-like inequality, Bulletin of the Australian Mathematical Society, 90(2)  (2014) 319-326.
\bibitem{kazmi} K.R. Kazmi,  On vector equilibrium problem, Proc. Indian Acad. Sci. Math. Sci. 110 (2000) 213-223.

\bibitem{larx} S. L\'aszl\'o,  A primal-dual approach of weak vector equilibrium problems, https://arxiv.org/abs/1704.06946
\bibitem{L1} {S. L\'aszl\'o}, {Existence of solutions of inverted variational inequalities}, Carpathian J. Math. 28 (2012)  271-278.
\bibitem{La1}  S. L\'aszl\'o, {Multivalued variational inequalities and coincidence point results}, J. Math Anal Appl. {404} (2013) 105-114.
\bibitem{La}  S. L\'aszl\'o, Vector  Equilibrium Problems on Dense Sets,  J. Optim. Theory Appl. {170} (2016) 437-457.
\bibitem{DTL}  D.T. Luc, {Existence Results for Densely Pseudomonotone Variational Inequalities}. J. Math Anal Appl. {254} (2001) 291-308.

\bibitem{Mau-Rac}  A. Maugeri, F.  Raciti, {On Existence Theorems for Monotone and Nonmonotone Variational Inequalities,} Journal of Convex Analysis 16  (2009) 899-911.

\bibitem{Ta}  T. Tanaka, Generalized semicontinuity and existence theorems for cone saddle points, Appl. Math. Optim. {36} (1997) 313-322.
\bibitem{Th} M. Th\'era, \'Etudes des fonctions convexes vectorielles semicontinues, Th\`ese de 3e cycle, Universit\'e de Pau, 1978.
\end{thebibliography}
\end{document}